\newtheorem{theorem}{Theorem}
\newtheorem{lemma}{Lemma}
\newtheorem*{remark}{Remark}
\newenvironment{proof}[1][Proof]{\noindent\textbf{#1.} }{\ \rule{0.5em}{0.5em}}
\begin{document}

\title{Interpolation of data by smooth non-negative functions}
\date{\today }
\author{Charles Fefferman, Arie Israel, Garving K. Luli \thanks{%
The first author is supported in part by NSF grant DMS-1265524, AFOSR
grant FA9550-12-1-0425, and Grant No 2014055 from the United States-Israel Binational Science Foundation (BSF). The third author is supported in part by NSF grant
DMS-1355968 and a start-up fund from UC Davis.}}
\maketitle

\section*{Introduction}
Continuing from \cite{fil-2016}, we prove a finiteness principle for interpolation of data by nonnegative $C^m$ functions. Our result raises the hope that one can start to understand constrained interpolation problems in which e.g. the interpolating function $F$ is required to be nonnegative.

Let us recall some notation used in \cite{fil-2016}. 

We fix positive integers $m$, $n$. We write $C^{m}\left( \mathbb{R}^{n}%
\right) $ to denote the Banach space of all real valued locally $C^{m}$ functions $F$ on $\mathbb{R}^{n}$, for which the
norm 
\begin{equation*}
\left\Vert F\right\Vert _{C^{m}\left( \mathbb{R}^{n}\right)
}:=\sup_{x\in \mathbb{R}^{n}}\max_{\left\vert \alpha \right\vert \leq
m}\left\vert \partial ^{\alpha }F\left( x\right) \right\vert
\end{equation*}%
is finite. 

We will also work with the function space $C^{m-1,1}(\mathbb{R}^n)$. A given continuous function $F:\mathbb{R}^{n}\rightarrow 
\mathbb{R}$ belongs to $C^{m-1,1}\left( \mathbb{R}^{n}\right) $ if and only
if its distribution derivatives $\partial ^{\beta }F$ belong to $L^{\infty
}\left( \mathbb{R}^{n}\right) $ for $\left\vert \beta \right\vert \leq m$.
We may take the norm on $C^{m-1,1}\left( \mathbb{R}^{n}\right) $ to be 
\begin{equation*}
\left\Vert F\right\Vert _{C^{m-1,1}\left( \mathbb{R}^{n}\right)
}=\max_{\left\vert \beta \right\vert \leq m}\text{ess.}\sup_{x\in \mathbb{R}%
^{n}}\left\vert \partial ^{\beta }F\left( x\right) \right\vert \text{.}
\end{equation*}%

Expressions $c\left( m,n\right) $, $C\left( m,n\right) $, $k\left(
m,n\right) $, etc. denote constants depending only on $m$, $n$; these
expressions may denote different constants in different occurrences. Similar
conventions apply to constants denoted by $C\left( m,n,D\right)$, $k\left(
D\right)$, etc.

If $X$ is any finite set, then $\#\left( X\right) $ denotes the number of
elements in $X$.

We are now ready to state our main theorem. 

\begin{theorem}
\label{Th4}For large enough $k^{\#}=k\left( m,n\right) $ and $C^{\#}=C\left(
m,n\right) $ the following hold.

\begin{description}
\item[(A) $C^{m}$ FLAVOR] Let $f:E\rightarrow [0,\infty)$ with $%
E\subset \mathbb{R}^{n}$ finite. Suppose that for each $S\subset E$ with $%
\#\left( S\right) \leq k^{\#}$, there exists $F^{S}\in C^{m}\left( \mathbb{R}%
^{n}\right) $ with norm $\left\Vert F^{S}\right\Vert _{C^{m}\left( \mathbb{R}%
^{n}\right) }\leq 1$, such that $F^{S}=f\ $on $S$ and $F^{S}\geq 0$ on $%
\mathbb{R}^{n}$. \newline
Then there exists $F\in C^{m}\left( \mathbb{R}^{n}\right) $ with norm $%
\left\Vert F\right\Vert _{C^{m}\left( \mathbb{R}^{n}\right) }\leq C^{\#}$,
such that $F=f$ on $E$ and $F\geq 0$ on $\mathbb{R}^{n}$.

\item[(B) $C^{m-1,1}$ FLAVOR] Let $f:E\rightarrow [0,\infty)$ with $%
E\subset \mathbb{R}^{n}$ arbitrary. Suppose that for each $S\subset E$ with $%
\#\left( S\right) \leq k^{\#}$, there exists $F^{S}\in C^{m-1,1}\left( 
\mathbb{R}^{n}\right) $ with norm $\left\Vert F^{S}\right\Vert
_{C^{m-1,1}\left( \mathbb{R}^{n}\right) }\leq 1$, such that $F^{S}=f\ $on $S$
and $F^{S}\geq 0$ on $\mathbb{R}^{n}$. \newline
Then there exists $F\in C^{m-1,1}\left( \mathbb{R}^{n}\right) $ with norm $%
\left\Vert F\right\Vert _{C^{m-1,1}\left( \mathbb{R}^{n}\right) }\leq C^{\#}$%
, such that $F=f$ on $E$ and $F\geq 0$ on $\mathbb{R}^{n}$.
\end{description}
\end{theorem}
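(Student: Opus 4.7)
The plan is to establish (B) for finite $E$ first via a Whitney-cube / partition-of-unity construction with nonnegativity woven in, then extend (B) to arbitrary $E$ by a compactness argument, and finally obtain (A) in parallel by running the same machinery with $C^m$ replacing $C^{m-1,1}$ throughout (since $E$ is finite in (A), no extra compactness step is needed for that part).

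For (B) with finite $E$, I would adapt the standard Fefferman-type template for finiteness principles. At each $x \in \mathbb{R}^n$ introduce the convex set $K(x) \subset \mathcal{P}$ of $(m-1)$-jets at $x$ of candidate extensions---functions $F \in C^{m-1,1}(\mathbb{R}^n)$ satisfying $\|F\|_{C^{m-1,1}} \le M$, $F \ge 0$ on $\mathbb{R}^n$, and $F = f$ on $E$, for a suitable $M = C(m,n)$. The finite-set hypothesis (with $k^\# = k(m,n)$ sufficiently large) together with a Helly-type argument on the finite-dimensional space $\mathcal{P}$ should establish that each $K(x)$ is nonempty, of bounded geometric complexity, and that there is a \emph{Whitney-consistent} selection of jets $P_x \in K(x)$ varying tamely as $x$ varies. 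I would then perform a Calder\'on--Zygmund decomposition of $\mathbb{R}^n \setminus E$ into Whitney cubes $Q$, construct on each $Q$ a nonnegative local interpolant $F_Q \in C^{m-1,1}$ matching the selected jet at a keystone point with $C^{m-1,1}$ norm $\le CM$, and patch via a nonnegative partition of unity $\{\theta_Q\}$ subordinate to slight enlargements of the cubes. The assembled $F = \sum_Q \theta_Q F_Q$ is automatically nonnegative because $\theta_Q \ge 0$ and $F_Q \ge 0$, and satisfies $\|F\|_{C^{m-1,1}} \le C^\#$ by the standard Whitney estimates applied to the jumps between neighboring jets.

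The main obstacle I expect is precisely this local step: constructing nonnegative local interpolants on Whitney cubes, of controlled norm, matching prescribed jets. Unconstrained Fefferman-type local extensions do not respect nonnegativity, and the constraint interacts delicately with the geometry near cubes where $f$ vanishes or is close to zero---for there the prescribed jet must itself be the jet of a nonnegative polynomial, which is a nontrivial algebraic condition. This is presumably where the preparatory results of \cite{fil-2016} are invoked and refined, providing both the local nonnegative extension lemmas and the associated Helly-type jet-selection framework needed to verify that the $K(x)$ behave well.

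For arbitrary $E$ in (B), I would exhaust $E$ by finite subsets $E_1 \subset E_2 \subset \cdots$, apply the finite case to each to obtain nonnegative $F_j \in C^{m-1,1}$ with $\|F_j\|_{C^{m-1,1}} \le C^\#$ and $F_j = f$ on $E_j$, and extract a limit $F$ via Arzel\`a--Ascoli on derivatives of order $\le m-1$ on compacta, combined with weak-$*$ compactness in $L^\infty$ of the top-order derivatives $\partial^\alpha F_j$, $|\alpha| = m$. The limit $F$ inherits nonnegativity and the norm bound, and satisfies $F = f$ on $\bigcup_j E_j = E$, completing (B). Part (A) is handled in parallel: since $E$ is finite, the same Whitney-cube strategy runs with $C^m$-local interpolants in place of $C^{m-1,1}$-local ones throughout, no compactness step is needed to pass from finite to infinite $E$, and the global norm bound is obtained from the same patching scheme.
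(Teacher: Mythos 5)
Your broad outline is on the right track---convex sets of $(m-1)$-jets, a Helly-flavored selection, a Whitney/Calder\'on--Zygmund patching, a nonnegative partition of unity, and Ascoli to pass from finite to arbitrary $E$ in the $C^{m-1,1}$ case---but the proposal stops exactly where the paper's real work begins, and the architecture you sketch does not quite match how the paper actually resolves the nonnegativity constraint.

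The specific tool the paper invokes from \cite{fil-2016} is not a collection of ``local nonnegative extension lemmas,'' but the general \emph{Finiteness Principle for Shape Fields} (Theorem \ref{theorem-fp-for-wsf}). To apply it, one must verify that the family $\vec{\Gamma}_f = (\Gamma_f(x,M))$, where $\Gamma_f(x,M)$ consists of jets at $x$ of nonnegative $C^m$ functions of norm $\le M$ agreeing with $f$ at $x$, is a $(C,1)$-\emph{convex shape field}. This is Lemma \ref{lemma-inf1}, and it is the central new idea: given two nonnegative extensions $F_1,F_2$ realizing jets $P_1,P_2$ and polynomials $Q_1,Q_2$ with $Q_1\odot_x Q_1 + Q_2\odot_x Q_2 = 1$, one builds cutoffs $\theta_1,\theta_2$ with $\theta_1^2+\theta_2^2 = 1$ and $J_x(\theta_i)=Q_i$, and blends via $F=\theta_1^2 F_1 + \theta_2^2 F_2$. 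The \emph{squared} partition of unity is what keeps $F\ge 0$ and makes $J_x(F)$ equal to the prescribed convex combination $Q_1\odot_x Q_1\odot_x P_1 + Q_2\odot_x Q_2\odot_x P_2$. Your worry about ``cubes where $f$ is small'' is precisely the worry this construction handles; your plan has no counterpart to it.

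The second architectural point you miss is that the output of Theorem \ref{theorem-fp-for-wsf} is an $F^0\in C^m(Q_0)$ which need \emph{not} be nonnegative---only its jets $J_x(F^0)$ at points of $E$ lie in $\Gamma_f(x,CM_0)$, i.e.\ each is the jet of \emph{some} nonnegative function. The paper therefore needs a second, separate Whitney-type patching step (Lemma \ref{lemma-inf2}): one extracts the Whitney field $(P^x)=(J_x(F^0))_{x\in E}$, and for each CZ cube $Q$ picks a nonnegative $F_Q$ with $J_{x_Q}(F_Q)=P^{x_Q}$ (such $F_Q$ exists by definition of $\Gamma_\ast$), then patches with a nonnegative partition of unity. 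Your plan collapses these into a single CZ step and asserts the local nonnegative interpolants ``presumably'' come from \cite{fil-2016}; in fact they come for free from the definition of $\Gamma_\ast$, and the real difficulty (verifying $(C,1)$-convexity, which licenses the appeal to the shape-field theorem) is left unaddressed in your sketch. Finally, a minor point of order: the paper proves the $C^m$ case first and then derives $C^{m-1,1}$ by repeating the arguments plus Ascoli, rather than the reverse order you propose, but that is cosmetic.
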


Our interest in Theorem \ref{Th4} arises in part from its possible connection to the interpolation algorithm of Fefferman-Klartag \cite{fb1,fb2}. Given a function $f:E\rightarrow \mathbb{R}$ with $E \subset \mathbb{R}^n$ finite, the goal of \cite{fb1,fb2} is to compute a function $F \in C^m(\mathbb{R}^n)$ such that $F=f$ on $E$, with $||F||_{C^m(\mathbb{R}^n)}$ as small as possible up to a  factor $C(m,n)$. Roughly speaking, the algorithm in \cite{fb1,fb2} computes such an $F$ using $O(N \log N)$ computer operations, where $N=\#(E)$. The algorithm is based on an easier version \cite{f-2005} of Theorem \ref{Th4}.  Our present result differs from the easier version in that we have added the hypothesis $F^S \geq 0$ and the conclusion $F\geq 0$. Accordingly, Theorem \ref{Th4} raises the hope that we can start to understand constrained interpolation problems, in which e.g. the interpolant $F$ is required to be nonnegative everywhere on $\mathbb{R}^n$.

For results related to Theorem \ref{Th4}, we refer the reader to our paper \cite{fil-2016} and references therein. 

In the following sections, we will set up the notation; then we will recall a main theorem in \cite{fil-2016} and use it to prove Theorem \ref{Th4}.

This paper is part of a literature on extension, interpolation, and
selection of functions, going back to H. Whitney's seminal work \cite%
{whitney-1934}, and including fundamental contributions by G. Glaeser \cite%
{gl-1958}, Y, Brudnyi and P. Shvartsman \cite%
{bs-1985, bs-1994,bs-1997,bs-1998,bs-2001, pavel-1982,pavel-1984,pavel-1986, pavel-1987, pavel-1992, s-2001,s-2002,pavel-2004, s-2008}, J. Wells 
\cite{jw-1973}, E. Le Gruyer \cite{lgw-2009}, and E. Bierstone, P. Milman,
and W. Paw{\l }ucki \cite{bmp-2003,bmp-2006,bm-2007}, as well as our own
papers \cite{f-2005, f-2006, f-2007, f-2009-b, fb1, fb2, f-2005-a, fl-2014}. See e.g. \cite{f-2009-b}
for the history of the problem, as well as Zobin \cite{zobin-1998,zobin-1999} for a related problem. 

We are grateful to the American Institute of Mathematics, the Banff
International Research Station, the Fields Institute, and the College of
William and Mary for hosting workshops on interpolation and extension. We
are grateful also to the Air Force Office of Scientific Research, the
National Science Foundation, the Office of Naval Research, and the U.S.-Israel Binational Science Foundation for financial support.

We are also grateful to Pavel Shvartsman and Alex Brudnyi for their comments on an earlier version of our manuscript, and to all the participants of the Eighth Whitney Problems Workshop for their interest in our work. 

\section{Notation and Preliminaries\label{notation-and-preliminaries}}
\subsection{Background Notation} \label{notation-prelim}
Fix $m$, $n\geq 1$. We will work with cubes in $\mathbb{R}^{n}$; all our
cubes have sides parallel to the coordinate axes. If $Q$ is a cube, then $%
\delta _{Q}$ denotes the sidelength of $Q$. For real numbers $A>0$, $AQ$
denotes the cube whose center is that of $Q$, and whose sidelength is $%
A\delta _{Q}$.

A \underline{dyadic} cube is a cube of the form $I_{1}\times I_{2}\times
\cdots \times I_{n}\subset \mathbb{R}^{n}$, where each $I_{\nu }$ has the
form $[2^{k}\cdot i_{\nu },2^{k}\cdot \left( i_{\nu }+1\right) )$ for
integers $i_{1},\cdots ,i_{n}$, $k$. Each dyadic cube $Q$ is contained in
one and only one dyadic cube with sidelength $2\delta _{Q}$; that cube is
denoted by $Q^{+}$.

We write $B_{n}\left( x,r\right) $ to denote the open ball in $\mathbb{R}%
^{n} $ with center $x$ and radius $r$, with respect to the Euclidean metric. 

We write $\mathcal{P}$ to denote the vector space of all real-valued
polynomials of degree at most $\left( m-1\right) $ on $\mathbb{R}^{n}$. If $%
x\in \mathbb{R}^{n}$ and $F$ is a real-valued $C^{m-1}$ function on a
neighborhood of $x$, then $J_{x}\left( F\right) $ (the \textquotedblleft
jet" of $F$ at $x$) denotes the $\left( m-1\right) ^{rst}$ order Taylor
polynomial of $F$ at $x$, i.e.,

\begin{equation*}
J_{x}\left( F\right) \left( y\right) =\sum_{\left\vert \alpha \right\vert
\leq m-1}\frac{1}{\alpha !}\partial ^{\alpha }F\left( x\right) \cdot \left(
y-x\right) ^{\alpha }.
\end{equation*}%
Thus, $J_{x}\left( F\right) \in \mathcal{P}$.

For each $x\in \mathbb{R}^{n}$, there is a natural multiplication $\odot
_{x} $ on $\mathcal{P}$ (\textquotedblleft multiplication of jets at $x$")
defined by setting%
\begin{equation*}
P\odot _{x}Q=J_{x}\left( PQ\right) \text{ for }P,Q\in \mathcal{P}\text{.}
\end{equation*}%

If $F$ is a real-valued function on a cube $Q$, then we write $F\in
C^{m}\left( Q\right) $ to denote that $F$ and its derivatives up to $m$-th
order extend continuously to the closure of $Q$. For $F\in C^{m}\left(
Q\right) $, we define 
\begin{equation*}
\left\Vert F\right\Vert _{C^{m}\left( Q\right) }=\sup_{x\in
Q}\max_{\left\vert \alpha \right\vert \leq m}\left\vert \partial ^{\alpha
}F\left( x\right) \right\vert .
\end{equation*}%

The function space $C^{m-1,1}(Q)$ and the norm $\Vert \cdot \Vert_{C^{m-1,1}(Q)}$ are defined analogously.

If $F\in C^{m}\left( Q\right) $ and $x$ belongs to the boundary of $Q$, then
we still write $J_{x}\left( F\right) $ to denote the $\left( m-1\right)
^{rst}$ degree Taylor polynomial of $F$ at $x$, even though $F$ isn't
defined on a full neighborhood of $x\in \mathbb{R}^{n}$.

Let $S\subset \mathbb{R}^{n}$ be non-empty and finite. A \underline{Whitney
field} on $S$ is a family of polynomials 
\begin{equation*}
\vec{P}=\left( P^{y}\right) _{y\in S}\text{ (each }P^{y}\in \mathcal{P}\text{%
),}
\end{equation*}%
parametrized by the points of $S$.

We write $Wh\left( S\right) $ to denote the vector space of all Whitney
fields on $S$.

For $\vec{P}=\left( P^{y}\right) _{y\in S}\in Wh\left( S\right) $, we define
the seminorm 
\begin{equation*}
\left\Vert \vec{P}\right\Vert _{\dot{C}^{m}\left( S\right) }=\max_{x,y\in
S,\left( x\not=y\right), |\alpha| \leq m}\frac{\left\vert \partial ^{\alpha
}\left( P^{x}-P^{y}\right) \left( x\right) \right\vert }{\left\vert
x-y\right\vert ^{m-\left\vert \alpha \right\vert }}\text{.}
\end{equation*}

(If $S$ consists of a single point, then $\left\Vert \vec{P}\right\Vert _{%
\dot{C}^{m}\left( S\right) }=0$.)

We also need an elementary fact about convex sets. 

\theoremstyle{plain} \newtheorem*{thm Helly}{Helly's Theorem}%
\begin{thm Helly}
Let $K_{1},\cdots ,K_{N}\subset \mathbb{R}^{D}$ be convex. Suppose that $K_{i_{1}}\cap \cdots \cap K_{i_{D+1}}$ is nonempty for any $i_{1}, \cdots, i_{D+1}\in
\{1,\cdots ,N\}$. Then $K_{1}\cap \cdots \cap K_{N}$ is nonempty.\end{thm Helly}

See \cite{rock-convex}.

\subsection{Shape Fields}

Let $E\subset \mathbb{R}^{n}$ be finite. For each $x\in E$, $M\in \left(
0,\infty \right) $, let $\Gamma \left( x,M\right) \subseteq \mathcal{P}$ be
a (possibly empty) convex set. We say that $\vec{\Gamma}=\left( \Gamma
\left( x,M\right) \right) _{x\in E,M>0}$ is a \underline{shape field} if for
all $x\in E$ and $0<M^{\prime }\leq M<\infty $, we have 
\begin{equation*}
\Gamma \left( x,M^{\prime }\right) \subseteq \Gamma \left( x,M\right) .
\end{equation*}

Let $\vec{\Gamma}=\left( \Gamma \left( x,M\right) \right) _{x\in E,M>0}$ be
a shape field and let $C_{w},\delta _{\max }$ be positive real numbers. We
say that $\vec{\Gamma}$ is \underline{$\left( C_{w},\delta _{\max }\right) $%
-convex} if the following condition holds:

Let $0<\delta \leq \delta _{\max }$, $x\in E$, $M\in \left( 0,\infty \right) 
$, $P_{1}$, $P_{2}$, $Q_{1}$, $Q_{2}\in \mathcal{P}$. Assume that

\begin{itemize}
\item[\refstepcounter{equation}\text{(\theequation)}\label{wsf1}] $P_1,P_2
\in\Gamma(x,M)$;

\item[\refstepcounter{equation}\text{(\theequation)}\label{wsf2}] $%
|\partial^\beta(P_1-P_2)(x)| \leq M\delta^{m-|\beta|}$ for $|\beta| \leq m-1$%
;

\item[\refstepcounter{equation}\text{(\theequation)}\label{wsf3}] $%
|\partial^\beta Q_i(x)|\leq \delta^{-|\beta|}$ for $|\beta| \leq m-1$ for $%
i=1,2$;

\item[\refstepcounter{equation}\text{(\theequation)}\label{wsf4}] $%
Q_1\odot_x Q_1 + Q_2 \odot_x Q_2 =1$.
\end{itemize}

Then

\begin{itemize}
\item[\refstepcounter{equation}\text{(\theequation)}\label{wsf5}] $%
P:=Q_1\odot_x Q_1\odot_x P_1 + Q_2 \odot_x Q_2 \odot_x P_2\in\Gamma(x,C_wM)$.
\end{itemize}

\subsection{Finiteness Principle for Shape Fields}
We recall a main result proven in \cite{fil-2016}. 
\begin{theorem}
\label{theorem-fp-for-wsf} For a large enough $k^{\#}$ determined by $m$, $n$%
, the following holds. Let $\vec{\Gamma}_{0}=\left( \Gamma _{0}\left(
x,M\right) \right) _{x\in E,M>0}$ be a $\left( C_{w},\delta _{\max }\right) $%
-convex shape field and let $Q_{0}\subset \mathbb{R}^{n}$ be a cube of
sidelength $\delta _{Q_{0}}\leq \delta _{\max }$. Also, let $x_{0}\in E\cap
5Q_{0}$ and $M_{0}>0$ be given. Assume that for each $S\subset E$ with $%
\#\left( S\right) \leq k^{\#}$ there exists a Whitney field $\vec{P}%
^{S}=\left( P^{z}\right) _{z\in S}$ such that 
\begin{equation*}
\left\Vert \vec{P}^{S}\right\Vert _{\dot{C}^{m}\left( S\right) }\leq M_{0}%
\text{,}
\end{equation*}%
and 
\begin{equation*}
P^{z}\in \Gamma _{0}\left( z,M_{0}\right) \text{ for all }z\in S\text{.}
\end{equation*}%
Then there exist $P^{0}\in \Gamma _{0}\left( x_{0},M_{0}\right) $ and $F\in
C^{m}\left( Q_{0}\right) $ such that the following hold, with a constant $%
C_{\ast }$ determined by $C_{w}$, $m$, $n$:

\begin{itemize}
\item $J_{z}(F)\in \Gamma _{0}\left( z,C_{\ast }M_{0}\right) $ for all $z\in
E\cap Q_{0}$.

\item $|\partial ^{\beta }\left( F-P^{0}\right) \left( x\right) |\leq
C_{\ast }M_{0}\delta _{Q_{0}}^{m-\left\vert \beta \right\vert }$ for all $%
x\in Q_{0}$, $\left\vert \beta \right\vert \leq m$.

\item In particular, $\left\vert \partial ^{\beta }F\left( x\right)
\right\vert \leq C_{\ast }M_{0}$ for all $x\in Q_{0}$, $\left\vert \beta
\right\vert =m$.
\end{itemize}
\end{theorem}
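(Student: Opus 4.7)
The plan is to follow the ``Main Lemma'' induction standard in Fefferman-type finiteness-principle proofs: induct on a combinatorial label $\mathcal{A} \subseteq \{\alpha : |\alpha| \leq m-1\}$ tracking which Taylor coefficients of jets in $\Gamma_0(x,M)$ are effectively pinned at the relevant scale, with a Calderón--Zygmund decomposition of $Q_0$ at each inductive layer, a polynomial partition-of-unity patching that invokes the $(C_w,\delta_{\max})$-convexity axiom (wsf1)--(wsf5), and Helly's Theorem to extract $P^0$.

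First I would set up the induction. The base case, when $\mathcal{A}$ is maximal, corresponds to shape fields whose elements $\Gamma_0(x,M)$ are essentially singletons up to scale $M$; the conclusion then follows from a direct Taylor estimate together with a Whitney-extension of the pinned jets. The inductive step, starting from label $\mathcal{A}$, requires constructing from $\vec{\Gamma}_0$ a refined shape field $\vec{\tilde{\Gamma}}$ of strictly smaller label that remains $(C_w', \delta_{\max})$-convex with bounded loss $C_w' \leq C(m,n)\, C_w$, and verifying that the finiteness hypothesis passes to $\vec{\tilde{\Gamma}}$. The propagation of convexity across the induction is where the axioms (wsf1)--(wsf5) earn their keep: one must check directly that whenever $P_1, P_2, Q_1, Q_2$ satisfy the refined hypotheses, the composite lies in the refined shape field with a controlled constant.

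Given the inductive hypothesis, building $F$ on $Q_0$ proceeds by a Calderón--Zygmund stopping-time decomposition into dyadic subcubes $Q_\nu$, stopping at cubes where the local finiteness hypothesis on $E \cap 3Q_\nu$ can be applied via the refined shape field $\vec{\tilde{\Gamma}}$ of smaller label. This yields local interpolants $F_\nu \in C^m(Q_\nu)$. I would patch using a smooth partition of unity $\{\theta_\nu\}$ subordinate to $\{1.1 Q_\nu\}$: for each $z \in E \cap Q_0$ the jet $J_z(F)$ appears as a sum built from neighboring $J_z(F_\nu)$ weighted by $J_z(\theta_\nu)$, and after the standard trick of reducing to a partition-of-squares form $Q_1 \odot_z Q_1 + Q_2 \odot_z Q_2 = 1$ at scale $\delta = \delta_{Q_\nu}$, the hypotheses (wsf2), (wsf3) are verified from the CZ geometry and the $\dot{C}^m$-bound on Whitney fields; pairwise application of (wsf5) then places $J_z(F)$ in $\Gamma_0(z, C_* M_0)$.

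The existence of $P^0 \in \Gamma_0(x_0, M_0)$ and the matching estimate $|\partial^\beta(F - P^0)(x)| \leq C_* M_0 \delta_{Q_0}^{m-|\beta|}$ come from Helly's Theorem in the finite-dimensional space $\mathcal{P}$ of dimension $D = \dim \mathcal{P}$: for each small $S \ni x_0$, the set $K_S \subset \mathcal{P}$ of candidate jets at $x_0$ compatible with some Whitney field on $S$ of $\dot{C}^m$-norm at most $M_0$ is convex, and nonempty by the hypothesis; intersection over any $D+1$ such $K_{S_i}$ is nonempty by re-applying the hypothesis to $S_1 \cup \cdots \cup S_{D+1}$ (provided $k^\#$ is at least $D+1$ times the basic threshold), so Helly produces a common $P^0$. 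The main obstacle, by a wide margin, is the inductive construction of $\vec{\tilde{\Gamma}}$ and the clean formulation that closes the induction: one must track carefully how the jet-multiplication $\odot_x$ interacts with the multi-index filtration defining $\mathcal{A}$, and propagate the constants $C_w$, $\delta_{\max}$ through the finitely many inductive steps without blowing up. This bookkeeping is precisely why the proof in \cite{fil-2016} is substantial and technical.
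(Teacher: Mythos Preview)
This paper does not contain a proof of Theorem~\ref{theorem-fp-for-wsf}: the theorem is merely \emph{recalled} from \cite{fil-2016} (see the sentence preceding its statement, ``We recall a main result proven in \cite{fil-2016}''), and is used here as a black box in the proof of Theorem~\ref{theorem-fp-for-nonnegative-interpolation}. There is therefore nothing in the present paper to compare your proposal against.

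That said, your sketch is an accurate high-level summary of the strategy actually carried out in \cite{fil-2016}: induction on a monotone label $\mathcal{A}$ in the multi-index set, a Calder\'on--Zygmund stopping-time decomposition at each step, patching of local solutions via a partition of unity with membership in $\Gamma_0(z,C_\ast M_0)$ secured by iterated use of the $(C_w,\delta_{\max})$-convexity axiom, and Helly's theorem in $\mathcal{P}$ to produce the common jet $P^0$. You correctly identify the main technical burden as the construction of the refined shape field of strictly smaller label and the propagation of the convexity constant through the finite induction. As a roadmap this is sound; as a proof it is of course only an outline, and the substantive content --- the precise definition of the label, the stopping-time criterion, and the verification that the refined object is again a $(C_w',\delta_{\max})$-convex shape field --- lives entirely in \cite{fil-2016}.
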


\section{$C^{m}$ Interpolation by Nonnegative Functions}

\label{inf}

In this section, $c$, $C$, $C^{\prime }$, etc. denote constants determined
by $m$ and $n$. These symbols may denote different constants in different
occurrences. For $x \in \mathbb{R}^n$ and $M >0$, define

\begin{itemize}
\item[\refstepcounter{equation}\text{(\theequation)}\label{inf1}] $\Gamma
_{\ast }\left( x,M\right) =\left\{ 
\begin{array}{c}
P\in \mathcal{P}:\text{ There exists }F\in C^{m}\left( \mathbb{R}^{n}\right) 
\text{ with }\left\Vert F\right\Vert _{C^{m}\left( \mathbb{R}^{n}\right)
}\leq M\text{,} \\ 
F\geq 0\text{ on }\mathbb{R}^{n}\text{, }J_{x}\left( F\right) =P\text{.}%
\end{array}%
\right\} $
\end{itemize}

It is not immediately clear how to compute $\Gamma _{\ast }$; we will return
to this issue in a later section. Let $E\subset \mathbb{R}^{n}$ be finite,
and let $f:E\rightarrow \lbrack 0,\infty )$. Define $\vec{\Gamma}%
_{f}=(\Gamma _{f}(x,M))_{x\in E,M>0}$, where

\begin{itemize}
\item[\refstepcounter{equation}\text{(\theequation)}\label{inf2}] $\Gamma
_{f}\left( x,M\right) =\left\{ P\in \Gamma _{\ast }\left( x,M\right)
:P\left( x\right) =f\left( x\right) \right\} $.
\end{itemize}

\begin{lemma}
\label{lemma-inf1} $\vec{\Gamma}_f$ is a $(C,1)$-convex shape field.
\end{lemma}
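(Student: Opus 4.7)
The plan is to produce a nonnegative $F \in C^m(\mathbb{R}^n)$ with $\|F\|_{C^m(\mathbb{R}^n)} \leq CM$ and $J_x(F) = P$, which combined with the easy identity $P(x) = Q_1(x)^2 f(x) + Q_2(x)^2 f(x) = f(x)$ (evaluate \eqref{wsf4} at $x$ and use $P_i(x) = f(x)$) will yield $P \in \Gamma_f(x, CM)$ and establish $(C,1)$-convexity.

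Let $F_1, F_2 \in C^m(\mathbb{R}^n)$ be nonnegative with $\|F_i\|_{C^m} \leq M$ and $J_x(F_i) = P_i$, furnished by \eqref{wsf1} and the definition of $\Gamma_f$. Since $Q_1(x)^2 + Q_2(x)^2 = 1$, at least one is $\geq 1/2$; after swapping $1 \leftrightarrow 2$ if necessary, assume $|Q_2(x)| \leq 1/\sqrt{2}$. Pick a small constant $c = c(m,n)$ such that the Taylor expansion of $Q_2$ at $x$---bounded termwise through \eqref{wsf3} by $|\partial^\beta Q_2(x)| \leq \delta^{-|\beta|}$---gives $|Q_2(y)| \leq 1$ throughout $B_n(x, c\delta)$. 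Let $\theta$ be a smooth cutoff with $\theta \equiv 1$ on $B_n(x, c\delta/2)$, $\mathrm{supp}\,\theta \subset B_n(x, c\delta)$, and $|\partial^\beta \theta| \leq C\delta^{-|\beta|}$. Set $\lambda := \theta \cdot Q_2^2$, which takes values in $[0,1]$ pointwise by construction, and define
\[
F := F_1 + \lambda(F_2 - F_1) = (1-\lambda)\, F_1 + \lambda\, F_2.
\]
Then $F \geq 0$ as a pointwise convex combination of nonnegatives. Because $J_x(\theta) = 1$, we get $J_x(F) = P_1 + Q_2 \odot_x Q_2 \odot_x (P_2 - P_1)$, which simplifies by \eqref{wsf4} to $Q_1 \odot_x Q_1 \odot_x P_1 + Q_2 \odot_x Q_2 \odot_x P_2 = P$.

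It remains to verify $\|F\|_{C^m} \leq CM$. Since $\|F\|_{C^m} \leq M + \|\lambda(F_2 - F_1)\|_{C^m}$, the task reduces to bounding $\|\lambda(F_2 - F_1)\|_{C^m}$. The key input is that \eqref{wsf2} together with Taylor's theorem applied to $F_2 - F_1$ at $x$ yields $|\partial^\gamma(F_2 - F_1)(y)| \leq CM\delta^{m - |\gamma|}$ for $|\gamma| \leq m-1$ and $y \in B_n(x, c\delta)$, while the trivial bound $2M$ suffices for $|\gamma| = m$. Combined via the Leibniz rule with $|\partial^\beta \lambda(y)| \leq C\delta^{-|\beta|}$ on $\mathrm{supp}\,\theta$, each summand $\partial^\beta \lambda \cdot \partial^\gamma(F_2 - F_1)$ with $\beta + \gamma = \alpha$, $|\alpha| \leq m$, is bounded by $CM\delta^{m - |\alpha|} \leq CM$ (using $\delta \leq 1$), and $\lambda(F_2 - F_1)$ vanishes off $\mathrm{supp}\,\theta$.

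The main obstacle is arranging $\lambda \in [0, 1]$ pointwise so that $F$ is visibly a convex combination; this is resolved by the opening case split on $|Q_i(x)|^2 \geq 1/2$ and the choice of $c$ small enough to force $Q_j^2 \leq 1$ on $\mathrm{supp}\,\theta$. The smallness of the jet $P_2 - P_1$ encoded in \eqref{wsf2} is then precisely what balances the potentially large ($\sim \delta^{-|\beta|}$) derivatives of $\lambda$ and keeps the $C^m$-norm of $\lambda(F_2 - F_1)$ bounded uniformly in $\delta \in (0,1]$.
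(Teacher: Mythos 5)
Your proof is correct, and it takes a genuinely simpler route than the paper's. The paper arranges (after a swap and a sign change so that $Q_1(x)\geq 1/\sqrt 2$) to build $\tilde\theta_1=\chi Q_1+(1-\chi)$, $\tilde\theta_2=\chi Q_2$, and then normalizes via $\theta_i=\tilde\theta_i(\tilde\theta_1^2+\tilde\theta_2^2)^{-1/2}$ to obtain an exact square partition of unity $\theta_1^2+\theta_2^2\equiv 1$ with $J_x(\theta_i)=Q_i$, setting $F=\theta_1^2F_1+\theta_2^2F_2$. You observe instead that an exact partition of unity is unnecessary: all one needs is a single weight $\lambda\in[0,1]$ with $J_x(\lambda)=Q_2\odot_xQ_2$ and controlled derivatives, since then $F=(1-\lambda)F_1+\lambda F_2$ is automatically nonnegative and has the right jet after rewriting $P$ via \eqref{wsf4}. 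Truncating $Q_2^2$ by a cutoff $\theta$ and choosing $c$ so that $|Q_2|\leq 1$ on the support achieves $\lambda\in[0,1]$ directly, avoiding the normalization, the sign flip, and the lower bound $\tilde\theta_1\geq 1/10$ that the paper needs to make the normalization smooth. The derivative bookkeeping (Taylor plus Leibniz on $B_n(x,c\delta)$, using $\delta\leq 1$) matches the paper's. Two small points worth adding for completeness: you should note, as the paper does, that $\vec\Gamma_f$ is a shape field in the first place (each $\Gamma_f(x,M)$ is convex and monotone in $M$, both immediate from the definition), since the lemma asserts this too; and you should state explicitly that the cutoff $\theta$ is taken with $0\leq\theta\leq 1$, which you rely on when concluding $\lambda\in[0,1]$.
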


\begin{proof}
It is clear that $\vec{\Gamma}_f$ is a shape field, i.e., each $\Gamma_f(x,M)
$ is convex, and $M^{\prime }\leq M$ implies $\Gamma_f(x,M^{\prime })
\subseteq \Gamma_f(x,M)$. To establish $(C,1)$-convexity, suppose we are
given the following:

\begin{itemize}
\item[\refstepcounter{equation}\text{(\theequation)}\label{inf3}] $0<\delta
\leq 1$, $x\in E$, $M>0$;
\end{itemize}

\begin{itemize}
\item[\refstepcounter{equation}\text{(\theequation)}\label{inf4}] $%
P_{1},P_{2}\in \Gamma _{f}\left( x,M\right) $ satisfying
\end{itemize}

\begin{itemize}
\item[\refstepcounter{equation}\text{(\theequation)}\label{inf5}] $%
\left\vert \partial ^{\beta }\left( P_{1}-P_{2}\right) \left( x\right)
\right\vert \leq M\delta ^{m-\left\vert \beta \right\vert }$ for $\left\vert
\beta \right\vert \leq m-1$;
\end{itemize}

\begin{itemize}
\item[\refstepcounter{equation}\text{(\theequation)}\label{inf6}] $%
Q_{1},Q_{2}\in \mathcal{P}$ satisfying
\end{itemize}

\begin{itemize}
\item[\refstepcounter{equation}\text{(\theequation)}\label{inf7}] $%
\left\vert \partial ^{\beta }Q_{i}\left( x\right) \right\vert \leq \delta
^{-\left\vert \beta \right\vert }$ for $\left\vert \beta \right\vert \leq
m-1 $, $i=1,2$, and
\end{itemize}

\begin{itemize}
\item[\refstepcounter{equation}\text{(\theequation)}\label{inf8}] $%
Q_{1}\odot _{x}Q_{1}+Q_{2}\odot _{x}Q_{2}=1$.
\end{itemize}

Set

\begin{itemize}
\item[\refstepcounter{equation}\text{(\theequation)}\label{inf9}] $%
P=Q_{1}\odot _{x}Q_{1}\odot _{x}P_{1}+Q_{2}\odot _{x}Q_{2}\odot _{x}P_{2}$.
\end{itemize}

We must prove that

\begin{itemize}
\item[\refstepcounter{equation}\text{(\theequation)}\label{inf10}] $P\in
\Gamma _{f}\left( x,CM\right) $.
\end{itemize}

Thanks to \eqref{inf4}, we have

\begin{itemize}
\item[\refstepcounter{equation}\text{(\theequation)}\label{inf11}] $%
P_{1}\left( x\right) =f\left( x\right) $ and $P_{2}\left( x\right) =f\left(
x\right) $,
\end{itemize}

and there exist functions $F_1, F_2 \in C^m(\mathbb{R}^n)$ such that

\begin{itemize}
\item[\refstepcounter{equation}\text{(\theequation)}\label{inf12}] $%
\left\Vert F_{i}\right\Vert _{C^{m}\left( \mathbb{R}^{n}\right) }\leq M$ $%
\left( i=1,2\right) $,
\end{itemize}

\begin{itemize}
\item[\refstepcounter{equation}\text{(\theequation)}\label{inf13}] $%
F_{i}\geq 0$ on $\mathbb{R}^{n}$ $\left( i=1,2\right) $, and
\end{itemize}

\begin{itemize}
\item[\refstepcounter{equation}\text{(\theequation)}\label{inf14}] $%
J_{x}\left( F_{i}\right) =P_{i}$ $\left( i=1,2\right) $.
\end{itemize}

We fix $F_1$, $F_2$ as above. By \eqref{inf8}, we have $|Q_i(x)| \geq \frac{1%
}{\sqrt{2}}$ for $i =1$ or for $i=2$. By possibly interchanging $Q_1$ and $Q_2$, and then possibly changing $Q_1$ to $-Q_1$,
we may suppose that

\begin{itemize}
\item[\refstepcounter{equation}\text{(\theequation)}\label{inf15}] $%
Q_{1}\left( x\right) \geq \frac{1}{\sqrt{2}}$.
\end{itemize}

For small enough $c_0$, \eqref{inf7} and \eqref{inf15} yield

\begin{itemize}
\item[\refstepcounter{equation}\text{(\theequation)}\label{inf16}] $%
Q_{1}\left( y\right) \geq \frac{1}{10}$ for $\left\vert y-x\right\vert \leq
c_{0}\delta $.
\end{itemize}

Fix $c_0$ as in \eqref{inf16}. We introduce a $C^m$ cutoff function $\chi$
on $\mathbb{R}^n$ with the following properties.

\begin{itemize}
\item[\refstepcounter{equation}\text{(\theequation)}\label{inf17}] $0\leq
\chi \leq 1$ on $\mathbb{R}^{n}$; $\chi =0$ outside $B_{n}\left(
x,c_{0}\delta \right) $; $\chi =1$ in a neighborhood of $x$;
\end{itemize}

\begin{itemize}
\item[\refstepcounter{equation}\text{(\theequation)}\label{inf18}] $%
\left\vert \partial ^{\beta }\chi \right\vert \leq C\delta ^{-\left\vert
\beta \right\vert }$ on $\mathbb{R}^{n}$, for $\left\vert \beta \right\vert
\leq m$.
\end{itemize}

We then define $\tilde{\theta}_{1}=\chi \cdot Q_{1}+\left( 1-\chi \right) $
and $\tilde{\theta}_{2}=\chi \cdot Q_{2}$.

These functions satisfy the following: $\tilde{\theta}_{i}\in C^{m}\left( 
\mathbb{R}^{n}\right) $ and $\left\vert \partial ^{\beta }\tilde{\theta}%
_{i}\right\vert \leq C\delta ^{-\left\vert \beta \right\vert }$ on $\mathbb{R%
}^{n}$ for $\left\vert \beta \right\vert \leq m$, $i=1,2$; $\tilde{\theta}%
_{1}\geq \frac{1}{10}$ on $\mathbb{R}^{n}$; $J_{x}\left( \tilde{\theta}%
_{i}\right) =Q_{i}$ for $i=1,2$; outside $B_{n}\left( x,c_{0}\delta \right) $
we have $\tilde{\theta}_{1}=1$ and $\tilde{\theta}_{2}=0$. Setting $\theta
_{i}=\tilde{\theta}_{i}\cdot \left( \tilde{\theta}_{1}^{2}+\tilde{\theta}%
_{2}^{2}\right) ^{-1/2}$ for $i=1$, $2$, we find that

\begin{itemize}
\item[\refstepcounter{equation}\text{(\theequation)}\label{inf19}] $\theta
_{i}\in C^{m}\left( \mathbb{R}^{n}\right) $ and $\left\vert \partial ^{\beta
}\theta _{i}\right\vert \leq C\delta ^{-\left\vert \beta \right\vert }$ on $%
\mathbb{R}^{n}$ for $\left\vert \beta \right\vert \leq m$, $i=1,2$;
\end{itemize}

\begin{itemize}
\item[\refstepcounter{equation}\text{(\theequation)}\label{inf20}] $\theta
_{1}^{2}+\theta _{2}^{2}= 1$ on $\mathbb{R}^{n}$;
\end{itemize}

\begin{itemize}
\item[\refstepcounter{equation}\text{(\theequation)}\label{inf21}] $%
J_{x}\left( \theta _{i}\right) =Q_{i}$ for $i=1,2$ (here we use (\ref{inf8}%
)); and
\end{itemize}

\begin{itemize}
\item[\refstepcounter{equation}\text{(\theequation)}\label{inf22}] outside $%
B_{n}\left( x,c_{0}\delta \right) $ we have $\theta _{1}=1$ and $\theta
_{2}=0$.
\end{itemize}

Now set

\begin{itemize}
\item[\refstepcounter{equation}\text{(\theequation)}\label{inf23}] $F=\theta
_{1}^{2}F_{1}+\theta _{2}^{2}F_{2}=F_{1}+\theta _{2}^{2}\left(
F_{2}-F_{1}\right) $ (see (\ref{inf20})).
\end{itemize}

Clearly $F\in C^{m}(\mathbb{R}^{n})$. By \eqref{inf14}, we have $%
J_{x}(F_{2}-F_{1})=P_{2}-P_{1}$; hence \eqref{inf5} yields the estimate%
\begin{equation*}
\left\vert \partial ^{\beta }\left( F_{2}-F_{1}\right) \left( x\right)
\right\vert \leq CM\delta ^{m-\left\vert \beta \right\vert }\text{ for }%
\left\vert \beta \right\vert \leq m-1\text{.}
\end{equation*}

Together with \eqref{inf12}, this tells us that%
\begin{equation*}
\left\vert \partial ^{\beta }\left( F_{2}-F_{1}\right) \right\vert \leq
CM\delta ^{m-\left\vert \beta \right\vert }\text{ on }B_{n}\left(
x,c_{0}\delta \right) \text{ for }\left\vert \beta \right\vert \leq m\text{.}
\end{equation*}

Recalling \eqref{inf19}, we deduce that 
\begin{equation*}
\left\vert \partial ^{\beta }\left( \theta _{2}^{2}\cdot \left(
F_{2}-F_{1}\right) \right) \right\vert \leq CM\delta ^{m-\left\vert \beta
\right\vert }\text{ on }B_{n}\left( x,c_{0}\delta \right) \text{ for }%
\left\vert \beta \right\vert \leq m\text{.}
\end{equation*}%
Together with \eqref{inf12} and \eqref{inf23}, this implies
that 
\begin{equation*}
\left\vert \partial ^{\beta }F\right\vert \leq CM\text{ on }B_{n}\left(
x,c_{0}\delta \right) \text{,}
\end{equation*}%
since $0<\delta \leq 1$ (see \eqref{inf3}). On the other hand, outside $%
B_{n}(x,c_{0}\delta )$ we have $F=F_{1}$ by \eqref{inf22}, \eqref{inf23};
hence $|\partial ^{\beta }F|\leq CM$ outside $B_{n}(x,c_{0}\delta )$ for $%
|\beta |\leq m$, by \eqref{inf12}. Thus, $|\partial ^{\beta }F|\leq CM$ on
all of $\mathbb{R}^{n}$ for $|\beta |\leq m$, i.e.,

\begin{itemize}
\item[\refstepcounter{equation}\text{(\theequation)}\label{inf24}] $%
\left\Vert F\right\Vert _{C^{m}\left( \mathbb{R}^{n}\right) }\leq CM$.
\end{itemize}

Also, from \eqref{inf13} and \eqref{inf23} we have

\begin{itemize}
\item[\refstepcounter{equation}\text{(\theequation)}\label{inf25}] $F\geq 0$
on $\mathbb{R}^{n}$;
\end{itemize}

and \eqref{inf9}, \eqref{inf14}, \eqref{inf21}, \eqref{inf23}
imply that

\begin{itemize}
\item[\refstepcounter{equation}\text{(\theequation)}\label{inf26}] $%
J_{x}\left( F\right) =Q_{1}\odot _{x}Q_{1}\odot _{x}P_{1}+Q_{2}\odot
_{x}Q_{2}\odot _{x}P_{2}=P$.
\end{itemize}

Since $F\in C^{m}\left( \mathbb{R}^{n}\right) $ satisfies \eqref{inf24}, %
\eqref{inf25}, \eqref{inf26}, we have

\begin{itemize}
\item[\refstepcounter{equation}\text{(\theequation)}\label{inf27}] $P\in
\Gamma _{\ast }\left( x,CM\right) $.
\end{itemize}

Moreover,

\begin{itemize}
\item[\refstepcounter{equation}\text{(\theequation)}\label{inf28}] $P\left(
x\right) =\left( Q_{1}\left( x\right) \right) ^{2}f\left( x\right) +\left(
Q_{2}\left( x\right) \right) ^{2}f\left( x\right) =f\left( x\right) $,
\end{itemize}
thanks to \eqref{inf8}, \eqref{inf9}, \eqref{inf11}. 

From \eqref{inf27}, %
\eqref{inf28} we conclude that $P \in \Gamma_f(x,CM)$, completing the proof
of Lemma \ref{lemma-inf1}.
\end{proof}

\begin{lemma}
\label{lemma-inf2} Let $\left( P^{x}\right) _{x\in E}$ be a Whitney field on
the finite set $E$, and let $M>0$. Suppose that

\begin{itemize}
\item[\refstepcounter{equation}\text{(\theequation)}\label{inf29}] $P^{x}\in
\Gamma _{\ast }\left( x,M\right) $ for each $x\in E$,
\end{itemize}

and that

\begin{itemize}
\item[\refstepcounter{equation}\text{(\theequation)}\label{inf30}] $%
\left\vert \partial ^{\beta }\left( P^{x}-P^{x^{\prime }}\right) \left(
x\right) \right\vert \leq M\left\vert x-x^{\prime }\right\vert
^{m-\left\vert \beta \right\vert }$ for $x,x^{\prime }\in E$ and $\left\vert
\beta \right\vert \leq m-1$.
\end{itemize}

Then there exists $F\in C^{m}(\mathbb{R}^{n})$ such that

\begin{itemize}
\item[\refstepcounter{equation}\text{(\theequation)}\label{inf31}] $%
\left\Vert F\right\Vert _{C^{m}\left( \mathbb{R}^{n}\right) }\leq CM$,
\end{itemize}

\begin{itemize}
\item[\refstepcounter{equation}\text{(\theequation)}\label{inf32}] $F\geq 0$
on $\mathbb{R}^{n}$, and
\end{itemize}

\begin{itemize}
\item[\refstepcounter{equation}\text{(\theequation)}\label{inf33}] $%
J_{x}\left( F\right) =P^{x}$ for all $x\in E$.
\end{itemize}
\end{lemma}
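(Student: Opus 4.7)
The plan is to glue the local nonnegative witness functions $F^x$ provided by~(\ref{inf29}) using a Whitney-type partition of unity on $\mathbb{R}^n \setminus E$, exactly as in the classical Whitney extension theorem except that the coefficients are now genuine $C^m$ functions rather than polynomials. Because each $F^x \geq 0$ and the partition functions are nonnegative, the resulting $F$ will automatically inherit $F \geq 0$, while the Whitney compatibility hypothesis~(\ref{inf30}) together with Taylor's theorem will ensure the global $C^m$ norm stays of order $M$.

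For each $x \in E$, use~(\ref{inf29}) to fix some $F^x \in C^m(\mathbb{R}^n)$ with $F^x \geq 0$, $\|F^x\|_{C^m(\mathbb{R}^n)} \leq M$, and $J_x(F^x) = P^x$. Take a Whitney decomposition $\{Q_\nu\}$ of $\mathbb{R}^n \setminus E$ by dyadic cubes with $\delta_{Q_\nu} \simeq \mathrm{dist}(Q_\nu, E)$, together with a subordinate smooth partition of unity $\{\phi_\nu\}$ satisfying $\phi_\nu \geq 0$, $\sum_\nu \phi_\nu \equiv 1$ on $\mathbb{R}^n \setminus E$, $\mathrm{supp}\,\phi_\nu \subset (1+c)Q_\nu$, and $|\partial^\beta \phi_\nu| \leq C\,\delta_{Q_\nu}^{-|\beta|}$. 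For each $\nu$, pick a point $x_\nu \in E$ with $\mathrm{dist}(x_\nu, Q_\nu) \leq C\,\delta_{Q_\nu}$ and define
\begin{equation*}
F(y) = \sum_\nu \phi_\nu(y)\, F^{x_\nu}(y) \ \text{for } y \notin E, \qquad F(x) = P^x(x) \ \text{for } x \in E.
\end{equation*}
Then~(\ref{inf32}) is immediate. For~(\ref{inf33}), in any ball about $x_0 \in E$ of radius less than $\tfrac{1}{2}\,\mathrm{dist}(x_0, E \setminus \{x_0\})$, every relevant $x_\nu$ equals $x_0$, so $F \equiv F^{x_0}$ on a neighborhood of $x_0$; in particular $F$ is $C^m$ there and $J_{x_0}(F) = P^{x_0}$.

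The substantive work is the global bound~(\ref{inf31}). On each Whitney cube $Q_\nu$ I would write $F = F^{x_\nu} + \sum_\mu \phi_\mu\,(F^{x_\mu} - F^{x_\nu})$, summed over the $O(1)$ cubes whose supports meet $Q_\nu$---all of comparable sidelength and with $|x_\mu - x_\nu| \leq C\,\delta_{Q_\nu}$. The key estimate to establish is
\begin{equation*}
|\partial^\beta(F^{x_\mu} - F^{x_\nu})(y)| \leq C\,M\,\delta_{Q_\nu}^{m - |\beta|} \quad \text{for } y \in Q_\nu,\ |\beta| \leq m,
\end{equation*}
obtained by writing $F^{x_\mu} = P^{x_\mu} + R^{x_\mu}$ with a Taylor remainder satisfying $|\partial^\beta R^{x_\mu}(y)| \leq C\,M\,|y - x_\mu|^{m - |\beta|}$ (and similarly for $F^{x_\nu}$), then controlling $\partial^\beta(P^{x_\mu} - P^{x_\nu})(y)$ by re-expanding the polynomial $P^{x_\mu} - P^{x_\nu}$ around $x_\mu$ and invoking~(\ref{inf30}). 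The Leibniz rule then yields $|\partial^\beta[\phi_\mu(F^{x_\mu} - F^{x_\nu})](y)| \leq CM$, the $\delta_{Q_\nu}^{-|\beta|}$ loss from the cutoff being exactly absorbed by the $\delta_{Q_\nu}^{m - |\beta|}$ decay. Adding the $O(M)$ contribution of $F^{x_\nu}$ itself gives $\|F\|_{C^m(Q_\nu)} \leq CM$ uniformly in $\nu$; combined with $F \equiv F^{x_0}$ near each point of $E$, this delivers~(\ref{inf31}). The only real obstacle is the careful verification of the key displayed estimate; the rest is standard Whitney-extension bookkeeping adapted to preserve nonnegativity.
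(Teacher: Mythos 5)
Your approach is essentially the paper's: glue local nonnegative witnesses $F^{x_\nu}$ via a Whitney-type cube decomposition adapted to $E$, use a nonnegative partition of unity so that $F\geq 0$ comes for free, and control $\|F\|_{C^m}$ by the Whitney compatibility~(\ref{inf30}) together with Taylor's theorem applied to $R^{x_\mu}=F^{x_\mu}-P^{x_\mu}$. Your key displayed estimate is correct and is precisely the paper's~(\ref{inf38}).

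There is, however, a genuine gap in the bookkeeping for cubes far from $E$. Since $E$ is finite, a Whitney decomposition of $\mathbb{R}^{n}\setminus E$ with $\delta_{Q_{\nu}}\simeq\mathrm{dist}(Q_{\nu},E)$ contains cubes of arbitrarily large sidelength, and for those cubes the assertion that the $\delta_{Q_{\nu}}^{-|\gamma|}$ loss from the cutoff is ``exactly absorbed'' fails: the Leibniz rule yields $|\partial^{\beta}[\phi_{\mu}(F^{x_{\mu}}-F^{x_{\nu}})]|\leq CM\,\delta_{Q_{\nu}}^{m-|\beta|}$, which is $\leq CM$ only when $\delta_{Q_{\nu}}\leq 1$; for $|\beta|<m$ and $\delta_{Q_{\nu}}$ large this blows up, so the argument as written does not give a uniform $C^{m}$ bound. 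The paper avoids this by building the cap $\delta_{Q}\leq 1$ into the stopping rule (the ``OK'' condition) and setting $F_{Q}\equiv 0$ on unit cubes far from $E$ (Type~3), for which the comparison estimate is trivial. You can repair your version either by imposing the same cap and using the zero function on large-distance cubes, or by observing that when $\delta_{Q_{\nu}}\geq 1$ no cancellation is needed because $|\partial^{\gamma}\phi_{\mu}|\leq C\delta_{Q_{\mu}}^{-|\gamma|}\leq C$ and $|\partial^{\beta-\gamma}(F^{x_{\mu}}-F^{x_{\nu}})|\leq 2M$ directly. A smaller point: to make the claim $F\equiv F^{x_{0}}$ near each $x_{0}\in E$ airtight, take $x_{\nu}$ to be a nearest point of $E$ to $Q_{\nu}$ (so that, on a small enough neighborhood of $x_{0}$, every relevant $x_{\nu}$ is forced to equal $x_{0}$), rather than merely any point within $C\delta_{Q_{\nu}}$, and shrink the neighborhood by the appropriate constant rather than by $\tfrac12$.
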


\begin{proof}
We modify slightly Whitney's proof \cite{whitney-1934} of the Whitney
extension theorem. We say that a dyadic cube $Q \subset \mathbb{R}^n$ is
``OK'' if $\#(E\cap 5Q) \leq 1$ and $\delta_Q \leq 1$. Then every small
enough $Q$ is OK (because $E$ is finite), and no $Q$ of sidelength $\delta_Q
>1$ is OK. Also, let $Q, Q^{\prime }$ be dyadic cubes with $5Q \subset
5Q^{\prime }$. If $Q^{\prime }$ is OK, then also $Q$ is OK. We define a 
\underline{Calder\'on-Zygmund} (or \underline{CZ}) cube to be an OK cube $Q$
such that no $Q^{\prime }$ that strictly contains $Q$ is OK. The above
remarks imply that the CZ cubes form a partition of $\mathbb{R}^n$; that the
sidelengths of the CZ cubes are bounded above by $1$ and below by some
positive number; and that the following condition holds.

\begin{itemize}
\item[\refstepcounter{equation}\text{(\theequation)}\label{inf34}] %
\textquotedblleft Good Geometry": If $Q,Q^{\prime }\in $ CZ and $\frac{65}{64%
}Q\cap \frac{65}{64}Q^{\prime }\not=\emptyset $, then $\frac{1}{2}\delta
_{Q}\leq \delta _{Q^{\prime }}\leq 2\delta _{Q}$.
\end{itemize}

We classify CZ cubes into three types as follows. 

$Q \in CZ$ is of

\begin{description}
\item[Type 1] if $E \cap 5Q \not= \emptyset$

\item[Type 2] if $E \cap 5Q =\emptyset$ and $\delta_Q <1$.

\item[Type 3] if $E \cap 5Q =\emptyset$ and $\delta_Q=1$.
\end{description}

\underline{Let $Q\in $ CZ be of Type 1.} Since $Q$ is OK, we have $\#(E\cap
5Q)\leq 1$. Hence $E\cap 5Q$ is a singleton, $E\cap 5Q=\left\{ x_{Q}\right\} 
$. Since $P^{x_{Q}}\in \Gamma _{\ast }\left( x_{Q},M\right) $, there exists $%
F_{Q}\in C^{m}\left( \mathbb{R}^{n}\right) $ such that

\begin{itemize}
\item[\refstepcounter{equation}\text{(\theequation)}\label{inf35}] $%
\left\Vert F_{Q}\right\Vert _{C^{m}\left( \mathbb{R}^{n}\right) }\leq M$, $%
F_{Q}\geq 0$ on $\mathbb{R}^{n}$, $J_{x_{Q}}\left( F_{Q}\right) =P^{x_{Q}}$.
\end{itemize}

We fix $F_Q$ as in \eqref{inf35}.

\underline{Let $Q\in $ CZ be of Type 2.} Then $\delta _{Q^{+}}\leq 1$ but $%
Q^{+}$ is not OK; hence $\#\left( E\cap 5Q^{+}\right) \geq 2$. We pick $%
x_{Q}\in E\cap 5Q^{+}$. Since $P^{x_{Q}}\in \Gamma _{\ast }\left(
x_{Q},M\right) $, there exists $F_{Q}\in C^{m}\left( \mathbb{R}^{n}\right) $
satisfying \eqref{inf35}. We fix such an $F_{Q}$.

\underline{Let $Q \in $ CZ be of Type 3.} Then we set $F_Q =0$. In place of %
\eqref{inf35}, we have the trivial results

\begin{itemize}
\item[\refstepcounter{equation}\text{(\theequation)}\label{inf36}] $%
\left\Vert F_{Q}\right\Vert _{C^{m}\left( \mathbb{R}^{n}\right) }=0$ and $%
F_{Q}\geq 0$ on $\mathbb{R}^{n}$.
\end{itemize}

Thus, we have defined $F_{Q}$ for all $Q\in $ CZ, and we have defined $%
x_{Q}\in E\cap 5Q^{+}$ for all $Q$ of Type 1 or Type 2. Note that

\begin{itemize}
\item[\refstepcounter{equation}\text{(\theequation)}\label{inf37}] $%
J_{x}\left( F_{Q}\right) =P^{x}$ for all $x\in E\cap 5Q$.
\end{itemize}

Indeed, if $Q$ is of Type 1, then \eqref{inf37} follows from \eqref{inf35}
since $E\cap 5Q=\{x_{Q}\}$. If $Q$ is of Type 2 or Type 3, then \eqref{inf37}
holds vacuously since $E\cap 5Q=\emptyset $. Now suppose $Q,Q^{\prime }\in $
CZ and $\frac{65}{64}Q\cap \frac{65}{64}Q^{\prime }\not=\emptyset $. We will
show that

\begin{itemize}
\item[\refstepcounter{equation}\text{(\theequation)}\label{inf38}] $%
\left\vert \partial ^{\beta }\left( F_{Q}-F_{Q^{\prime }}\right) \right\vert
\leq CM\delta _{Q}^{m-\left\vert \beta \right\vert }$ on $\frac{65}{64}Q\cap 
\frac{65}{64}Q^{\prime }$ for $\left\vert \beta \right\vert \leq m$.
\end{itemize}

To see this, suppose first that $Q$ or $Q^{\prime }$ is of Type 3. Then $%
\delta_Q$ or $\delta_{Q^{\prime }}$ is equal to $1$, hence $\delta_Q \geq 
\frac{1}{2}$ by \eqref{inf34}. Consequently, \eqref{inf38} asserts simply
that

\begin{itemize}
\item[\refstepcounter{equation}\text{(\theequation)}\label{inf39}] $%
\left\vert \partial ^{\beta }\left( F_{Q}-F_{Q^{\prime }}\right) \right\vert
\leq CM$ on $\frac{65}{64}Q\cap \frac{65}{64}Q^{\prime }$ for $\left\vert
\beta \right\vert \leq m$,
\end{itemize}
and \eqref{inf39} follows at once from \eqref{inf35}, \eqref{inf36}. Thus, %
\eqref{inf38} holds if $Q$ or $Q^{\prime }$ is of Type 3. Suppose that
neither $Q$ nor $Q^{\prime }$ is of Type 3. Then $x_{Q}\in E\cap 5Q^{+}$, $%
x_{Q^{\prime }}\in E\cap 5(Q^{\prime +})$, $\frac{65}{64}Q\cap \frac{65}{64}%
Q^{\prime }\not=\emptyset $, $\frac{1}{2}\delta _{Q}\leq \delta _{Q^{\prime
}}\leq 2\delta _{Q}$. Consequently,

\begin{itemize}
\item[\refstepcounter{equation}\text{(\theequation)}\label{inf40}] $%
\left\vert x_{Q}-x_{Q^{\prime }}\right\vert \leq C\delta _{Q}$, and
\end{itemize}

\begin{itemize}
\item[\refstepcounter{equation}\text{(\theequation)}\label{inf41}] $%
\left\vert x-x_{Q}\right\vert $, $\left\vert x-x_{Q^{\prime }}\right\vert
\leq C\delta _{Q}$ for all $x\in \frac{65}{64}Q\cap \frac{65}{64}Q^{\prime }$%
.
\end{itemize}

Applying \eqref{inf35} to $Q$ and to $Q^{\prime }$, we find that

\begin{itemize}
\item[\refstepcounter{equation}\text{(\theequation)}\label{inf42}] $%
\left\vert \partial ^{\beta }\left( F_{Q}-P^{x_{Q}}\right) \left( x\right)
\right\vert \leq CM\left\vert x-x_{Q}\right\vert ^{m-\left\vert \beta
\right\vert }\leq CM\delta _{Q}^{m-\left\vert \beta \right\vert }$, and
\end{itemize}

\begin{itemize}
\item[\refstepcounter{equation}\text{(\theequation)}\label{inf43}] $%
\left\vert \partial ^{\beta }\left( F_{Q^{\prime }}-P^{x_{Q^{\prime
}}}\right) \left( x\right) \right\vert \leq CM\left\vert x-x_{Q^{\prime
}}\right\vert ^{m-\left\vert \beta \right\vert }\leq CM\delta
_{Q}^{m-\left\vert \beta \right\vert }$,
\end{itemize}

for $x\in \frac{65}{64}Q\cap \frac{65}{64}Q^{\prime }$, $\left\vert \beta
\right\vert \leq m$. \newline
Also, \eqref{inf30}, \eqref{inf40}, \eqref{inf41} imply that

\begin{itemize}
\item[\refstepcounter{equation}\text{(\theequation)}\label{inf44}] $%
\left\vert \partial ^{\beta }\left( P^{x_{Q}}-P^{x_{Q^{\prime }}}\right)
\left( x\right) \right\vert \leq CM\delta _{Q}^{m-\left\vert \beta
\right\vert }$ for $x\in \frac{65}{64}Q\cap \frac{65}{64}Q^{\prime }$, $%
\left\vert \beta \right\vert \leq m$.
\end{itemize}

(Recall, $P^{x_{Q}}-P^{x_{Q^{\prime }}}$ is a polynomial of degree at most $%
m-1$.)

Estimates \eqref{inf42}, \eqref{inf43}, \eqref{inf44} together imply %
\eqref{inf38} in case neither $Q$ nor $Q^{\prime }$ is of Type 3. Thus, %
\eqref{inf38} holds in all cases.

Next, as in Whitney \cite{whitney-1934}, we introduce a partition of unity

\begin{itemize}
\item[\refstepcounter{equation}\text{(\theequation)}\label{inf45}] $%
1=\sum_{Q\in CZ}\theta _{Q}$ on $\mathbb{R}^{n}$,
\end{itemize}

where each $\theta_Q \in C^m(\mathbb{R}^n)$, and

\begin{itemize}
\item[\refstepcounter{equation}\text{(\theequation)}\label{inf46}] support $%
\theta _{Q}\subset \frac{65}{64}Q$, $\left\vert \partial ^{\beta }\theta
_{Q}\right\vert \leq C\delta _{Q}^{-\left\vert \beta \right\vert }$ for $%
\left\vert \beta \right\vert \leq m$, $\theta _{Q}\geq 0$ on $\mathbb{R}^{n}$%
.
\end{itemize}

We define

\begin{itemize}
\item[\refstepcounter{equation}\text{(\theequation)}\label{inf47}] $%
F=\sum_{Q\in CZ}\theta _{Q}F_{Q}$ on $\mathbb{R}^{n}$.
\end{itemize}

Thus, $F \in C^m_{loc}(\mathbb{R}^n)$ since CZ is a locally finite partition
of $\mathbb{R}^n$, and $F \geq 0$ on $\mathbb{R}^n$ since $\theta_Q \geq 0$
and $F_Q \geq 0$ for each $Q$. Let $\hat{x} \in \mathbb{R}^n$, and let $\hat{%
Q}$ be the one and only CZ cube containing $\hat{x}$. Then for $|\beta| \leq
m$, we have

\begin{itemize}
\item[\refstepcounter{equation}\text{(\theequation)}\label{inf48}] $\partial
^{\beta }F\left( \hat{x}\right) =\partial ^{\beta }F_{\hat{Q}}\left( \hat{x}%
\right) +\sum_{Q\in CZ}\partial ^{\beta }\left( \theta _{Q}\cdot \left(
F_{Q}-F_{\hat{Q}}\right) \right) \left( \hat{x}\right) $.
\end{itemize}

A given $Q\in $ CZ enters into the sum in \eqref{inf48} only if $\hat{x}\in 
\frac{65}{64}Q$; there are at most $C$ such cubes $Q$, thanks to %
\eqref{inf34}. Moreover, for each $Q\in $ CZ with $\hat{x}\in \frac{65}{64}Q$%
, we learn from \eqref{inf38} and \eqref{inf46} that 
\begin{equation*}
\left\vert \partial ^{\beta }\left( \theta _{Q}\cdot \left( F_{Q}-F_{\hat{Q}%
}\right) \right) \left( \hat{x}\right) \right\vert \leq CM\delta
_{Q}^{m-\left\vert \beta \right\vert }\leq CM\text{ for }\left\vert \beta
\right\vert \leq m\text{, since }\delta _{Q}\leq 1\text{.}
\end{equation*}%
Since also $\left\vert \partial ^{\beta }F_{\hat{Q}}\left( \hat{x}\right)
\right\vert \leq CM$ for $\left\vert \beta \right\vert \leq m$ by %
\eqref{inf35}, \eqref{inf36}, it now follows from \eqref{inf48} that $%
\left\vert \partial ^{\beta }F\left( \hat{x}\right) \right\vert \leq CM$ for
all $\left\vert \beta \right\vert \leq m$. Here, $\hat{x}\in \mathbb{R}^{n}$
is arbitrary. Thus, $F\in C^{m}\left( \mathbb{R}^{n}\right) $ and $%
||F||_{C^{m}\left( \mathbb{R}^{n}\right) }\leq CM$.

Next, let $x\in E$. For any $Q\in $ CZ such that $x\in \frac{65}{64}Q$, we
have $J_{x}(F_{Q})=P^{x}$, by \eqref{inf37}. Since support $\theta
_{Q}\subset \frac{65}{64}Q$ for each $Q\in $ CZ, it follows that $%
J_{x}(\theta _{Q}F_{Q})=J_{x}(\theta _{Q})\odot _{x}P^{x}$ for each $Q\in $
CZ, and consequently, 
\begin{equation*}
J_{x}(F)=\sum_{Q\in CZ}J_{x}\left( \theta _{Q}F_{Q}\right) =\left[
\sum_{Q\in CZ}J_{x}\left( \theta _{Q}\right) \right] \odot _{x}P^{x}=P^{x}%
\text{, by (\ref{inf45}).}
\end{equation*}%
Thus, $F\in C^{m}\left( \mathbb{R}^{n}\right) $, $\left\Vert F\right\Vert
_{C^{m}\left( \mathbb{R}^{n}\right) }\leq CM$, $F\geq 0$ on $\mathbb{R}^{n}$%
, and $J_{x}\left( F\right) =P^{x}$ for each $x\in E$.

The proof of Lemma \ref{lemma-inf2} is complete.
\end{proof}

\begin{theorem}[Finiteness Principle for Nonnegative $C^m$ Interpolation]
\label{theorem-fp-for-nonnegative-interpolation}

There exist constants $k^\#$, $C$, depending only on $m$, $n$, such that the
following holds.

Let $E \subset \mathbb{R}^n$ be finite, and let $f:E \rightarrow [0, \infty)$%
. Let $M_0 >0$. Suppose that for each $S \subset E$ with $\#(S) \leq k^\#$,
there exists $\vec{P}^S = (P^x)_{x\in S} \in Wh(S)$ such that

\begin{itemize}
\item $P^x \in \Gamma_f(x,M_0)$ for each $x \in S$, and

\item $|\partial^\beta (P^x -P^y)(x)| \leq M_0 |x-y|^{m-|\beta|}$ for $x, y
\in S$, $|\beta| \leq m-1$.
\end{itemize}

Then there exists $F\in C^{m}(\mathbb{R}^{n})$ such that

\begin{itemize}
\item $\|F\|_{C^m(\mathbb{R}^n)} \leq CM_0$,

\item $F \geq 0$ on $\mathbb{R}^n$, and

\item $F = f$ on $E$.
\end{itemize}
\end{theorem}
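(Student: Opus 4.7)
The plan is to reduce the theorem to Lemma \ref{lemma-inf2} by constructing a Whitney field $(P^x)_{x \in E}$ satisfying (i) $P^x \in \Gamma_\ast(x, CM_0)$ for each $x \in E$ and (ii) $|\partial^\beta(P^x - P^y)(x)| \leq CM_0 |x-y|^{m-|\beta|}$ for $x, y \in E$, $|\beta| \leq m-1$; then Lemma \ref{lemma-inf2} yields the desired $F \in C^m(\mathbb{R}^n)$ with $\|F\|_{C^m(\mathbb{R}^n)} \leq CM_0$, $F \geq 0$, and $J_x(F) = P^x$, whence $F(x) = P^x(x) = f(x)$ on $E$.

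By Lemma \ref{lemma-inf1}, $\vec{\Gamma}_f$ is $(C,1)$-convex, and the hypothesis of the theorem is precisely the hypothesis of Theorem \ref{theorem-fp-for-wsf} (with $\Gamma_0 = \Gamma_f$, $\delta_{\max} = 1$). So on any unit cube $Q \subset \mathbb{R}^n$ with $E \cap 5Q \ne \emptyset$, Theorem \ref{theorem-fp-for-wsf} produces $F_Q \in C^m(Q)$ with $\|F_Q\|_{C^m(Q)} \leq CM_0$ and $J_z(F_Q) \in \Gamma_f(z, CM_0)$ for $z \in E \cap Q$. I would then fix a locally finite cover $\{Q_\nu\}$ of $\mathbb{R}^n$ by unit cubes equipped with $C^m$ cutoffs $\eta_\nu$ supported in $Q_\nu$ satisfying $\sum_\nu \eta_\nu^2 \equiv 1$ with uniformly bounded derivatives, apply Theorem \ref{theorem-fp-for-wsf} to each $Q_\nu$ with $E \cap 5Q_\nu \ne \emptyset$ to obtain $F_\nu$, and set $F_\nu = 0$ otherwise. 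Define the aggregate $F^\ast := \sum_\nu \eta_\nu^2 F_\nu \in C^m(\mathbb{R}^n)$, which satisfies $\|F^\ast\|_{C^m(\mathbb{R}^n)} \leq CM_0$ by the bounded overlap of the cover, and set $P^x := J_x(F^\ast)$ for $x \in E$. Condition (ii) then follows at once from Taylor's theorem applied to $F^\ast$.

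The main obstacle is condition (i): verifying $P^x \in \Gamma_\ast(x, CM_0)$. The aggregate $F^\ast$ is not itself a witness, because the local $F_\nu$ produced by Theorem \ref{theorem-fp-for-wsf} are constrained only at the jet level and need not be nonnegative, so neither is $F^\ast$. Instead, I would exploit the $(C,1)$-convex structure of $\vec{\Gamma}_f$ directly at the jet level by writing
\begin{equation*}
P^x \;=\; J_x(F^\ast) \;=\; \sum_\nu J_x(\eta_\nu) \odot_x J_x(\eta_\nu) \odot_x J_x(F_\nu),
\end{equation*}
noting $\sum_\nu J_x(\eta_\nu) \odot_x J_x(\eta_\nu) = J_x\bigl(\sum_\nu \eta_\nu^2\bigr) = 1$, that each $J_x(F_\nu) \in \Gamma_f(x, CM_0)$, that pairwise differences $|\partial^\beta(J_x(F_\nu) - J_x(F_\mu))(x)|$ are absolutely bounded by $CM_0$ (verifying \eqref{wsf2} with some $\delta \sim 1$), and that the jets $J_x(\eta_\nu)$ can be arranged to satisfy \eqref{wsf3} after a slight rescaling of the cutoffs. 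Iterating the two-term $(C,1)$-convex property $k(n)$ times (once for each $\nu$ with $x \in \operatorname{supp} \eta_\nu$, which is bounded by a constant depending on $n$), each step multiplying the constant by $C_w$, I conclude $P^x \in \Gamma_f(x, C'M_0) \subset \Gamma_\ast(x, C'M_0)$ with $C' = C'(m,n)$. Applying Lemma \ref{lemma-inf2} to the resulting Whitney field completes the proof.
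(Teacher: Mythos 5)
Your overall strategy (build a Whitney field satisfying the hypotheses of Lemma \ref{lemma-inf2} and invoke that lemma) is the same as the paper's, but your route to the Whitney field is different and contains a genuine gap. The paper first reduces to the case $E\subset\frac12 Q_0$ with $\delta_{Q_0}=1$; there a \emph{single} application of Theorem \ref{theorem-fp-for-wsf} produces one $F^0\in C^m(Q_0)$ with $J_x(F^0)\in\Gamma_f(x,CM_0)$ for every $x\in E$, so setting $P^x=J_x(F^0)$ gives the Whitney field immediately, with no averaging over cubes. Lemma \ref{lemma-inf2} then yields a nonnegative $F$, and \emph{only then} does the paper pass to general $E$ by gluing these already-nonnegative local solutions with an ordinary (not squared) nonnegative partition of unity $1=\sum_\nu\chi_\nu$, so $F=\sum_\nu\chi_\nu F_\nu\ge0$ trivially. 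You instead glue the non-nonnegative local jet interpolants $F_\nu$ first, via $F^\ast=\sum_\nu\eta_\nu^2 F_\nu$, and then must verify that $P^x=J_x(F^\ast)=\sum_\nu J_x(\eta_\nu)\odot_x J_x(\eta_\nu)\odot_x J_x(F_\nu)$ lies in $\Gamma_f(x,CM_0)$.

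That verification is where the gap is. The $(C_w,\delta_{\max})$-convexity of a shape field is by definition a \emph{two-term} property: it combines $P_1,P_2$ via $Q_1,Q_2$ with $Q_1\odot_x Q_1+Q_2\odot_x Q_2=1$. To ``iterate'' it on an $N$-term squared partition of unity you would need to group, say, $\hat Q_1\odot_x\hat Q_1+\cdots+\hat Q_{N-1}\odot_x\hat Q_{N-1}$ into a single square $R\odot_x R$ so as to apply the two-term property with $R$ and $\hat Q_N$. But such a jet square root need not exist: e.g.\ with $n=2$, $m=3$, take $\hat Q_1=1-\tfrac12(s^2+t^2)$, $\hat Q_2=s$, $\hat Q_3=t$; then $\hat Q_1\odot_0\hat Q_1+\hat Q_2\odot_0\hat Q_2+\hat Q_3\odot_0\hat Q_3=1$, yet $\hat Q_2\odot_0\hat Q_2+\hat Q_3\odot_0\hat Q_3=s^2+t^2$ is not the $\odot_0$-square of any polynomial of degree $\le m-1$. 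So ``iterating the two-term $(C,1)$-convex property $k(n)$ times'' does not go through as stated, and no amount of rescaling the cutoffs fixes this, since the obstruction is the nonexistence of the square root, not the size of the derivatives. Your argument can be repaired, but not by the abstract two-term convexity: you would instead need to establish an $N$-term Whitney-convexity statement for $\vec\Gamma_f$ directly, using its concrete description (jets of nonnegative $C^m$ functions with value $f(x)$ at $x$) exactly as in the proof of Lemma \ref{lemma-inf1}, replacing the pair $\theta_1^2F_1+\theta_2^2F_2$ by $\sum_\nu\eta_\nu^2 G_\nu$ with $G_\nu$ nonnegative witnesses for $J_x(F_\nu)\in\Gamma_f(x,CM_0)$. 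Alternatively, simply reorganize as the paper does: localize $E$ to a unit cube first, and glue last.
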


\begin{proof}
Suppose first that $E \subset \frac{1}{2}Q_0$ for a cube $Q_0$ of sidelength 
$\delta_{Q_0} =1$. Pick any $x_0 \in E$. (If $E$ is empty, our theorem holds
trivially.)

Let $S \subset E$ with $\#(S) \leq k^\#$.

Our present hypotheses supply the Whitney field $\vec{P}^S$ required in the hypotheses of Theorem \ref{theorem-fp-for-wsf}.

Hence, recalling Lemma \ref{lemma-inf1} and applying Theorem \ref{theorem-fp-for-wsf}, we obtain

\begin{itemize}
\item[\refstepcounter{equation}\text{(\theequation)}\label{inf49}] $P^0 \in
\Gamma_f(x_0, CM_0)$
\end{itemize}

and

\begin{itemize}
\item[\refstepcounter{equation}\text{(\theequation)}\label{inf50}] $F^0 \in
C^m(Q_0)$
\end{itemize}

such that

\begin{itemize}
\item[\refstepcounter{equation}\text{(\theequation)}\label{inf51}] $J_x(F^0)
\in \Gamma_f(x,CM_0)$ for all $x \in E \cap Q_0 =E$
\end{itemize}

and

\begin{itemize}
\item[\refstepcounter{equation}\text{(\theequation)}\label{inf52}] $%
|\partial^\beta(P^0 -F^0)|\leq CM_0$ on $Q_0$, for $|\beta|\leq m$.
\end{itemize}

From \eqref{inf1}, \eqref{inf2}, \eqref{inf49}, we have $|\partial^\beta P^0
(x_0)| \leq CM_0$ for $|\beta| \leq m-1$.

Since $P^0$ is a polynomial of degree at most $m-1$, and since $x_0 \in E
\subset Q_0$ with $\delta_{Q_0} =1$, it follows that $|\partial^\beta P^0 |
\leq CM_0$ on $Q_0$ for $|\beta| \leq m$.

Together with \eqref{inf52}, this tells us that

\begin{itemize}
\item[\refstepcounter{equation}\text{(\theequation)}\label{inf53}] $%
|\partial^\beta F^0| \leq CM_0$ on $Q_0$ for $|\beta| \leq m$.
\end{itemize}

Note that $F^0$ needn't be nonnegative.

Set $P^x = J_x(F^0)$ for $x \in E$. Then

\begin{itemize}
\item[\refstepcounter{equation}\text{(\theequation)}\label{inf54}] $P^{x}\in
\Gamma _{f}\left( x,CM_{0}\right) $ for $x\in E$, and
\end{itemize}

\begin{itemize}
\item[\refstepcounter{equation}\text{(\theequation)}\label{inf55}] $%
\left\vert \partial ^{\beta }\left( P^{x}-P^{y}\right) \left( x\right)
\right\vert \leq CM_{0}\left\vert x-y\right\vert ^{m-\left\vert \beta
\right\vert }$ for $x,y\in E$, $\left\vert \beta \right\vert \leq m-1$.
\end{itemize}

By Lemma \ref{lemma-inf2}, there exists $F\in C^{m}\left( \mathbb{R}%
^{n}\right) $ such that

\begin{itemize}
\item[\refstepcounter{equation}\text{(\theequation)}\label{inf56}] $%
\left\Vert F\right\Vert _{C^{m}\left( \mathbb{R}^{n}\right) }\leq CM_0$,
\end{itemize}

\begin{itemize}
\item[\refstepcounter{equation}\text{(\theequation)}\label{inf57}] $F\geq 0$
on $\mathbb{R}^{n}$, and
\end{itemize}

\begin{itemize}
\item[\refstepcounter{equation}\text{(\theequation)}\label{inf58}] $%
J_{x}\left( F\right) =P^{x}$ for each $x\in E$.
\end{itemize}

From \eqref{inf54} and \eqref{inf2}, we have $P^x(x)=f(x)$ for each $x \in E$%
; hence, \eqref{inf58} implies that

\begin{itemize}
\item[\refstepcounter{equation}\text{(\theequation)}\label{inf59}] $F\left(
x\right) =f\left( x\right) $ for each $x\in E$.
\end{itemize}

Our results \eqref{inf56}, \eqref{inf57}, \eqref{inf59} are the conclusions
of our theorem. Thus, we have proven Theorem \ref%
{theorem-fp-for-nonnegative-interpolation} in the case in which $E \subset 
\frac{1}{2} Q_0$ with $\delta_{Q_0} =1$.

To pass to the general case (arbitrary finite $E \subset \mathbb{R}^n$), we
set up a partition of unity $1 =\sum_{\nu} \chi_\nu$ on $\mathbb{R}^n$,
where each $\chi_\nu \in C^m(\mathbb{R}^n)$ and $\chi_\nu \geq 0$ on $%
\mathbb{R}^n$, $\|\chi_\nu\|_{C^m(\mathbb{R}^n)} \leq C$, support $\chi_\nu
\subset \frac{1}{2} Q_\nu$, with $\delta_{Q_\nu} =1$, and with any given
point of $\mathbb{R}^n$ belonging to at most $C$ of the $Q_\nu$.

For each $\nu$, we apply the known special case of our theorem to the set $%
E_\nu = E \cap \frac{1}{2}Q_\nu$ and the function $f_\nu = f|_{E_\nu}$.
Thus, we obtain $F_\nu \in C^m(\mathbb{R}^n)$, with $\|F_\nu\|_{C^m(\mathbb{R%
}^n)} \leq CM_0$, $F_\nu \geq 0$ on $\mathbb{R}^n$, and $F_\nu = f$ on $E
\cap \frac{1}{2} Q_\nu$.

Setting $F = \sum _\nu \chi_\nu F_\nu \in C^m_{loc}(\mathbb{R}^n)$, we
verify easily that $F \in C^m (\mathbb{R}^n)$, $\|F\|_{C^m(\mathbb{R}^n)}
\leq CM_0$, $F \geq 0$ on $\mathbb{R}^n$, and $F = f $ on $E$.

This completes the proof of Theorem \ref%
{theorem-fp-for-nonnegative-interpolation}.
\end{proof}

\begin{remark}
Conversely, we make the following trivial observation: Let $E\subset \mathbb{%
R}^{n}$ be finite, let $f:E\rightarrow \lbrack 0,\infty )$, and let $M_{0}>0$%
. Suppose $F\in C^{m}(\mathbb{R}^{n})$ satisfies $\Vert F\Vert _{C^{m}(%
\mathbb{R}^{n})}\leq M_{0}$, $F\geq 0$ on $\mathbb{R}^{n}$, $F=f$ on $E$.
Then for each $x\in E$, we have

\begin{itemize}
\item $P^{x}=J_{x}(F)\in \Gamma _{f}(x,M_{0})$ by \eqref{inf1}, \eqref{inf2}%
; and

\item $|\partial ^{\beta }(P^{x}-P^{y})(x)|\leq CM_{0}|x-y|^{m-|\beta |}$
for $x,y\in E$, $|\beta |\leq m-1$.
\end{itemize}

Therefore, for any $S \subset E$, the Whitney field $\vec{P}^S = (P^x)_{x
\in S} \in Wh(S)$ satisfies

\begin{itemize}
\item $P^{x}\in \Gamma _{f}(x,CM_{0})$ for $x\in S$, and

\item $|\partial ^{\beta }(P^{x}-P^{y})(x)|\leq CM_{0}|x-y|^{m-|\beta |}$
for $x,y\in S$, $|\beta |\leq m-1$.
\end{itemize}
Note that Theorem \ref{Th4} (A) follows easily from Theorem \ref{theorem-fp-for-nonnegative-interpolation}.
\end{remark}

\section{Computable Convex Sets}

\label{ccs}

In this section, we discuss computational issues regarding the convex set

\begin{itemize}
\item[\refstepcounter{equation}\text{(\theequation)}\label{cs1}] $\Gamma
_{\ast }\left( x,M\right) =\left\{ J_{x}\left( F\right) :F\in C^{m}\left( 
\mathbb{R}^{n}\right) \text{, }\left\Vert F\right\Vert _{C^{m}\left( \mathbb{%
R}^{n}\right) }\leq M\text{, }F\geq 0\text{ on }\mathbb{R}^{n}\right\} .$
\end{itemize}

We write $c$, $C$, $C^{\prime }$, etc., to denote constants determined by $m$
and $n$. These symbols may denote different constants in different
occurrences.

We will define convex sets $\tilde{\Gamma}_{\ast}(x,M) \subset \mathcal{P}$,
prove that

\begin{itemize}
\item[\refstepcounter{equation}\text{(\theequation)}\label{cs2}] $\tilde{%
\Gamma}_{\ast }(x,cM)\subset \Gamma _{\ast }\left( x,M\right) \subset \tilde{%
\Gamma}_{\ast }(x,CM)$ for all $x\in \mathbb{R}^{n}$, $M>0$,
\end{itemize}

and explain how (in principle) one can compute $\tilde{\Gamma}_{\ast}(x,M)$.

We may then use

\begin{itemize}
\item[\refstepcounter{equation}\text{(\theequation)}\label{cs3}] $\tilde{%
\Gamma}_{f}\left( x,M\right) =\left\{ P\in \tilde{\Gamma}_{\ast
}(x,M):P\left( x\right) =f\left( x\right) \right\} $
\end{itemize}
in place of $\Gamma_f(x,M)$ in the statement of Theorem \ref%
{theorem-fp-for-nonnegative-interpolation}. (The assertion in terms of $%
\tilde{\Gamma}_f$ follows trivially from \eqref{cs2} and the original
assertion in terms of $\Gamma_f$.)

To achieve \eqref{cs2}, we will define

\begin{itemize}
\item[\refstepcounter{equation}\text{(\theequation)}\label{cs4}] $\tilde{%
\Gamma}_{\ast }(x,M)=\left\{ MP\left( \cdot +x\right) ):P\in \tilde{\Gamma}%
_{0}\right\} $, for a convex set $\tilde{\Gamma}_{0}$.
\end{itemize}

We will prove that

\begin{itemize}
\item[\refstepcounter{equation}\text{(\theequation)}\label{cs5}] $%
\Gamma_{\ast}(0,c) \subset \tilde{\Gamma}_0 \subset \Gamma_{\ast}(0,C)$.
\end{itemize}

Property \eqref{cs2} then follows at once from \eqref{cs1}, \eqref{cs4}, and %
\eqref{cs5}.

Thus, our task is to define a convex set $\tilde{\Gamma}_{0}$ satisfying %
\eqref{cs5}, and explain how (in principle) one can compute $\tilde{\Gamma}%
_{0}$.

Recall that $\mathcal{P}$ is the vector space of $(m-1)$-jets. We will work
in the space of $m$-jets. In this section, we let $\mathcal{P}^{+}$ denote
the vector space of real-valued polynomials of degree at most $m$ on $%
\mathbb{R}^{n}$, and we write $J_{x}^{+}(F)$ to denote the $m^{th}$-degree
Taylor polynomial of $F$ at $x$, i.e.,%
\begin{equation*}
J_{x}^{+}\left( F\right) \left( y\right) =\sum_{\left\vert \alpha
\right\vert \leq m}\frac{1}{\alpha !}\left( \partial ^{\alpha }F\left(
x\right) \right) \cdot \left( y-x\right) ^{\alpha }\text{.}
\end{equation*}

We define

\begin{itemize}
\item[\refstepcounter{equation}\text{(\theequation)}\label{cs6}] $\Gamma
_{0}^{+}=\left\{ 
\begin{array}{c}
P\in \mathcal{P}^{+}:\left\vert \partial ^{\beta }P\left( 0\right)
\right\vert \leq 1\text{ for }\left\vert \beta \right\vert \leq m\text{; }%
P\left( x\right) +\left\vert x\right\vert ^{m}\geq 0\text{ for all }x\in 
\mathbb{R}^{n}\text{;} \\ 
\text{and for every }\epsilon >0\text{, there exists }\delta >0\text{ such
that} \\ 
\text{ }P\left( x\right) +\epsilon \left\vert x\right\vert ^{m}\geq 0\text{
for }\left\vert x\right\vert \leq \delta \text{.}%
\end{array}%
\right\} $.
\end{itemize}

Later, we will discuss how $\Gamma^+_0$ may be computed in principle.

We next establish the following result.

\begin{lemma}
\label{lemma-ccs} For small enough $c$ and large enough $C$, the following
hold.

\begin{description}
\item[(A)] If $F \in C^m(\mathbb{R}^n)$, $\|F\|_{C^m(\mathbb{R}^n)} \leq c$, 
$F \geq 0$ on $\mathbb{R}^n$, then $J_0^+(F) \in \Gamma_0^+$.

\item[(B)] If $P \in \Gamma_0^+$, then there exists $F \in C^m(\mathbb{R}^n)$
such that $\|F\|_{C^m(\mathbb{R}^n)} \leq C$, $F \geq 0$ on $\mathbb{R}^n$,
and $J_0^+(F) = P$.
\end{description}
\end{lemma}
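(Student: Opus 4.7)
For part (A), the plan is a standard Taylor-remainder estimate. With $P = J_0^+(F)$ one has $\partial^\beta P(0) = \partial^\beta F(0)$, so the bound $|\partial^\beta P(0)| \leq 1$ for $|\beta| \leq m$ holds as soon as $c \leq 1$. The integral form of the order-$m$ Taylor remainder, combined with $\|F\|_{C^m(\mathbb{R}^n)} \leq c$, yields the global estimate $|F(x) - P(x)| \leq C(m,n)\,c\,|x|^m$, and, using continuity of $\partial^\alpha F$ at $0$, the local refinement $|F(x) - P(x)| = o(|x|^m)$ as $x \to 0$. Since $F \geq 0$, the global estimate gives $P(x) \geq -Cc\,|x|^m$, so for $c$ satisfying $Cc \leq 1$ we obtain $P(x) + |x|^m \geq 0$ everywhere; the local refinement delivers the third defining condition of $\Gamma_0^+$.

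For part (B), my plan is to seek $F$ in the form $F = \chi P + H$, where $\chi \in C^m(\mathbb{R}^n)$ is a fixed cutoff with $\chi \equiv 1$ on $B_n(0, 1)$ and $\mathrm{supp}\,\chi \subset B_n(0, 2)$, and $H$ is a nonnegative correction still to be built, satisfying $H \in C^m(\mathbb{R}^n)$, $\|H\|_{C^m(\mathbb{R}^n)} \leq C(m,n)$, $J_0^+(H) = 0$, and $H(x) \geq -\chi(x)\,P(x)$ for every $x \in \mathbb{R}^n$. Granting such $H$, the required properties of $F$ follow at once: $F \geq 0$ by construction; $\|F\|_{C^m(\mathbb{R}^n)} \leq C$ uses the universal bound $|P(x)| \leq C(m,n)$ on $B_n(0, 2)$ coming from $|\partial^\beta P(0)| \leq 1$; and $J_0^+(F) = J_0^+(\chi P) + J_0^+(H) = P$ since $\chi \equiv 1$ near the origin.

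To construct $H$, I would exploit condition (iii) through the modulus
\[
\omega(r) := \sup_{0 < |y| \leq r} \frac{\max\{0, -P(y)\}}{|y|^m},
\]
which is nondecreasing, bounded by $1$ (by condition (ii) in the definition of $\Gamma_0^+$), and tends to $0$ as $r \to 0^+$ (by condition (iii)). Fixing a standard smooth dyadic partition of unity $\{\phi_k\}_{k \geq 0}$ of $(0, \infty)$, with $\mathrm{supp}\,\phi_k \subset [2^{-k-1}, 2^{-k+1}]$, $\sum_k \phi_k \equiv 1$, and $|\phi_k^{(j)}| \leq C\,2^{kj}$, I set $\omega_k := \omega(2^{-k+1})$ and
\[
H(x) := \chi(x) \sum_{k \geq 0} \omega_k\,|x|^m\,\phi_k(|x|).
\]
Since $\omega_k \geq \omega(|x|)$ on $\mathrm{supp}\,\phi_k(|\cdot|)$ by monotonicity of $\omega$, and $\sum_k \phi_k(|x|) = 1$ for $x \neq 0$, this yields $H(x) \geq \omega(|x|)\,|x|^m \geq \max\{0, -P(x)\} \geq -\chi(x)\,P(x)$ on $\mathrm{supp}\,\chi \setminus \{0\}$, while trivially $H \geq 0 \geq -\chi P$ outside $\mathrm{supp}\,\chi$.

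The main obstacle is showing that $H \in C^m(\mathbb{R}^n)$, with $J_0^+(H) = 0$ and $\|H\|_{C^m(\mathbb{R}^n)} \leq C(m, n)$. Away from the origin, $H$ is smooth by local finiteness of the sum; on $\mathrm{supp}\,\phi_k(|\cdot|)$, where $|x| \sim 2^{-k}$, the Leibniz rule together with $|\phi_k^{(j)}| \leq C\,2^{kj}$ gives
\[
\bigl|\partial^\alpha\bigl(\omega_k\,|x|^m\,\phi_k(|x|)\bigr)\bigr| \leq C\,\omega_k\,2^{-k(m - |\alpha|)} \leq C\,\omega(|x|)\,|x|^{m - |\alpha|} \quad \text{for } |\alpha| \leq m.
\]
These estimates provide a uniform $C^m$ bound away from $0$ and force $\partial^\alpha H(x) \to 0$ as $x \to 0$ for $|\alpha| \leq m$ (since $\omega(r) \to 0$). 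The hard part will be upgrading this pointwise decay into the classical existence of $\partial^\alpha H(0)$ for $|\alpha| \leq m$; I expect to do this by induction on $|\alpha|$, using the above bounds to control difference quotients at the origin and thereby conclude $\partial^\alpha H(0) = 0$, which then delivers $J_0^+(H) = 0$ and completes the construction.
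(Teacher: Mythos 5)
Your plan is essentially the same as the paper's: both parts hinge on a dyadic bump correction near the origin that is nonnegative, whose shell-by-shell size is controlled uniformly by condition (ii) of $\Gamma_0^+$, and which decays as $k\to\infty$ by condition (iii), so that the total correction lies in $C^m$, has zero $m$-jet at $0$, and dominates $\max\{0,-P\}$; you then cut off with $\chi$ exactly as the paper does. The paper parametrizes the shells by the absolute quantities $b_k=\max\{0,\,-\min_{|y|\le 2^{-k}}P(y)\}$ and bump functions $\varphi(2^kx)$ that are identically $1$ on each shell, and then establishes the $C^m$ regularity and $J_0^+(\tilde F)=P$ by showing the partial sums $\tilde F_K$ form a Cauchy sequence in $C^m(\overline{B_n(0,1)})$; you instead use the $|x|^m$-normalized moduli $\omega_k$ times a genuine dyadic partition of unity, and propose to get regularity at $0$ by induction on $|\alpha|$ via difference quotients. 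Both routes close the argument; the Cauchy-in-$C^m$ device is perhaps tidier because it delivers all derivatives at $0$ in one stroke.

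Two small points to tidy. First, the displayed bound $|\partial^\alpha(\omega_k|x|^m\phi_k(|x|))|\le C\,\omega(|x|)\,|x|^{m-|\alpha|}$ has the modulus on the wrong side: on $\operatorname{supp}\phi_k(|\cdot|)$ one has $\omega_k=\omega(2^{-k+1})\ge\omega(|x|)$, so the honest estimate is $\le C\,\omega(4|x|)\,|x|^{m-|\alpha|}$ (say), which of course still tends to $0$ as $x\to 0$, and that is all you need. Second, since you sum only over $k\ge 0$, the identity $\sum_{k\ge 0}\phi_k(|x|)=1$ fails for $|x|$ near or above $1$, so with $\chi$ supported in $B_n(0,2)$ the lower bound $H\ge -\chi P$ is not quite established on the outer annulus; either shrink $\operatorname{supp}\chi$ to $B_n(0,1/2)$ (as the paper does) or include finitely many terms with $k<0$. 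Neither issue affects the substance; the proposal is correct.
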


\begin{proof}
(A) follows trivially from Taylor's theorem. We prove (B).

Let $P\in \Gamma _{0}^{+}$ be given. We introduce cutoff functions $\varphi $%
, $\chi \in C^{m}\left( \mathbb{R}^{n}\right) $ with the following properties.

\begin{itemize}
\item[\refstepcounter{equation}\text{(\theequation)}\label{cs7}] $\left\Vert
\chi \right\Vert _{C^{m}\left( \mathbb{R}^{n}\right) }\leq C$, $\chi =1$ in
a neighborhood of $0$, $\chi =0$ outside $B_{n}\left( 0,1/2\right) $, and $%
0\leq \chi \leq 1$ on $\mathbb{R}^{n}$.
\end{itemize}

\begin{itemize}
\item[\refstepcounter{equation}\text{(\theequation)}\label{cs8}] $\left\Vert
\varphi \right\Vert _{C^{m}\left( \mathbb{R}^{n}\right) }\leq C$, $\varphi
=1 $ for $1/2\leq \left\vert x\right\vert \leq 2$, $\varphi \geq 0$ on $\mathbb{R}%
^{n}$,
\end{itemize}
and $\varphi \left( x\right)
=0$ unless $1/4<\left\vert x\right\vert <4$.

For $k \geq 0$, let

\begin{itemize}
\item[\refstepcounter{equation}\text{(\theequation)}\label{cs9}] $\varphi
_{k}\left( x\right) =\varphi \left( 2^{k}x\right) $ $\left( x\in \mathbb{R}%
^{n}\right) $.
\end{itemize}

Thus,

\begin{itemize}
\item[\refstepcounter{equation}\text{(\theequation)}\label{cs10}] $%
\left\Vert \varphi _{k}\right\Vert _{C^{m}\left( \mathbb{R}^{n}\right) }\leq
C2^{mk}$, $\varphi _{k}\geq 0$ on $\mathbb{R}^{n}$, $\varphi _{k}\left(
x\right) =1$ for $2^{-1-k}\leq \left\vert x\right\vert \leq 2^{1-k}$, $%
\varphi _{k}\left( x\right) =0$ unless $2^{-2-k}\leq \left\vert x\right\vert
\leq 2^{2-k}$.
\end{itemize}

Also, for $k \geq 0$, we define a real number $b_k$ as follows.

\begin{itemize}
\item[\refstepcounter{equation}\text{(\theequation)}\label{cs11}] $b_{k}=0$
if $P\left( x\right) \geq 0$ for $\left\vert x\right\vert \leq 2^{-k}$; $%
b_{k}=-\min \left\{ P\left( x\right) :\left\vert x\right\vert \leq
2^{-k}\right\} $ otherwise.
\end{itemize}

Since $P \in \Gamma_0^+$, the $b_k$ satisfy the following:

\begin{itemize}
\item[\refstepcounter{equation}\text{(\theequation)}\label{cs12}] $0\leq
b_{k}\leq 2^{-mk}$ for all $k\geq 0$.
\end{itemize}

\begin{itemize}
\item[\refstepcounter{equation}\text{(\theequation)}\label{cs13}] $%
b_{k}\cdot 2^{mk}\rightarrow 0$ as $k\rightarrow \infty $.
\end{itemize}

By definition of the $b_k$, we have also for each $k \geq 0$ that

\begin{itemize}
\item[\refstepcounter{equation}\text{(\theequation)}\label{cs14}] $P\left(
x\right) +b_{k}\geq 0$ for $\left\vert x\right\vert \leq 2^{-k}$.
\end{itemize}

We define a function $\tilde{F}$ on the closed unit ball $\overline{B_n(0,1)}$ by setting

\begin{itemize}
\item[\refstepcounter{equation}\text{(\theequation)}\label{cs15}] $\tilde{F}%
\left( x\right) =P\left( x\right) +\sum_{k=0}^{\infty }b_{k}\varphi
_{k}\left( x\right) $ for $x\in \overline{B_{n}\left( 0,1\right)} $.
\end{itemize}
(The sum contains
at most $C$ nonzero terms for any given $x$.)

We will check that

\begin{itemize}
\item[\refstepcounter{equation}\text{(\theequation)}\label{cs16}] $\tilde{F}%
\geq 0$ on $\overline{B_{n}\left( 0,1\right)} $.
\end{itemize}

Indeed, $\tilde{F}\left( 0\right) =P(0)\geq 0$ since each $\varphi _{k}(0)=0$
and $P\in \Gamma _{0}^{+}$. For $\hat{x}\in \overline{B_{n}(0,1)}\setminus \{0\}$ we
have $2^{-1-\hat{k}}\leq |\hat{x}|\leq 2^{-\hat{k}}$ for some $\hat{k}\geq 0$%
.

We then have $\varphi _{\hat{k}}(\hat{x})=1$ by \eqref{cs10}, hence $P(\hat{x%
})+b_{\hat{k}}\varphi _{\hat{k}}(\hat{x})\geq 0$ by \eqref{cs14}. Since also 
$b_{k}\varphi _{k}(\hat{x})\geq 0$ for all $k$, it follows that $$\tilde{F}(%
\hat{x})=\left[ P\left( \hat{x}\right) +b_{\hat{k}}\varphi _{\hat{k}}\left( 
\hat{x}\right) \right] +\sum_{k\not=\hat{k}}b_{k}\varphi _{k}\left( x\right)
\geq 0,$$ completing the proof of \eqref{cs16}.

Next, we check that

\begin{itemize}
\item[\refstepcounter{equation}\text{(\theequation)}\label{cs17}] $\tilde{F}%
\in C^{m}\left( \overline{B_{n}\left( 0,1\right)} \right) $, $\left\Vert \tilde{F}%
\right\Vert _{C^{m}\left( \overline{B_{n}\left( 0,1\right)} \right) }\leq C$, $%
J_{0}^{+}\left( \tilde{F}\right) =P$.
\end{itemize}

To see this, let

\begin{itemize}
\item[\refstepcounter{equation}\text{(\theequation)}\label{cs18}] $\tilde{F}%
_{K}=P+\sum_{k=0}^{K}b_{k}\varphi _{k}$ for $K\geq 0$.
\end{itemize}

Since $P\in \Gamma _{0}^{+}$, we have $\left\vert \partial ^{\beta }P\left(
0\right) \right\vert \leq 1$ for $\left\vert \beta \right\vert \leq m$, hence

\begin{itemize}
\item[\refstepcounter{equation}\text{(\theequation)}\label{cs19}] $%
\left\Vert P\right\Vert _{C^{m}\left(\overline{ B_{n}\left( 0,1\right)} \right) }\leq C$%
.
\end{itemize}

Also, \eqref{cs10} and \eqref{cs12} give%
\begin{equation*}
\left\Vert b_{k}\varphi _{k}\right\Vert _{C^{m}\left(\overline{ B_n\left( 0,1\right)}
\right) }\leq C\text{ for each }k\text{.}
\end{equation*}

Since any given $x \in \overline{ B_n(0,1)}$ belongs to at most $C$ of the supports of
the $\varphi_k$, it follows that

\begin{itemize}
\item[\refstepcounter{equation}\text{(\theequation)}\label{cs20}] $%
\left\Vert \sum_{k=0}^{K}b_{k}\varphi _{k}\right\Vert _{C^{m}\left(
\overline{B_{n}\left( 0,1\right)} \right) }\leq C$.
\end{itemize}

From \eqref{cs18}, \eqref{cs19}, \eqref{cs20}, we see that

\begin{itemize}
\item[\refstepcounter{equation}\text{(\theequation)}\label{cs21}] $\tilde{F}%
_{K}\in C^{m}\left( \overline{B_{n}\left( 0,1\right)} \right) $ and $\left\Vert \tilde{F%
}\right\Vert _{C^{m}\left( \overline{B_{n}\left( 0,1\right)} \right) }\leq C$.
\end{itemize}

Also, \eqref{cs10} and \eqref{cs18} tell us that

\begin{itemize}
\item[\refstepcounter{equation}\text{(\theequation)}\label{cs22}] $%
J_{0}^{+}\left( \tilde{F}_{K}\right) =P$ for each $K$.
\end{itemize}

Furthermore for $K_{1} < K_{2}$, \eqref{cs18} gives $\tilde{F}_{K_{2}}-%
\tilde{F}_{K_{1}}=\sum_{K_{1}<k\leq K_{2}}b_{k}\varphi _{k}$. Let $\epsilon
>0$. From \eqref{cs10} and \eqref{cs13} we see that 
\begin{equation*}
\max_{K_{1}<k\leq K_{2}}\left\Vert b_{k}\varphi _{k}\right\Vert
_{C^{m}\left( \overline{B_{n}\left( 0,1\right)} \right) }<\epsilon \text{ if }K_{1}%
\text{ is large enough.}
\end{equation*}%
Since any given point lies in support $\varphi _{k}$ for at most $C$
distinct $k$, it follows that%
\begin{equation*}
\left\Vert \sum_{K_{1}<k\leq K_{2}}b_{k}\varphi _{k}\right\Vert
_{C^{m}\left( \overline{B_{n}\left( 0,1\right)} \right) }\leq C\epsilon \text{ if }%
K_{2}>K_{1}\text{ and }K_{1}\text{ is large enough.}
\end{equation*}

Thus, $(\tilde{F}_{K})_{K\geq 0}$ is a Cauchy sequence in $C^{m}(\overline{B_{n}(0,1)})$%
. Consequently, $\tilde{F}_{K}\rightarrow \tilde{F}_{\infty }$ in $%
C^{m}(\overline{B_{n}(0,1)})$-norm for some $\tilde{F}_{\infty }\in C^{m}(\overline{B_{n}(0,1))}$.
From \eqref{cs21} and \eqref{cs22}, we have%
\begin{equation*}
\left\Vert \tilde{F}_{\infty }\right\Vert _{C^{m}\left( \overline{B_{n}\left(
0,1\right)} \right) }\leq C\text{ and }J_{0}^{+}\left( \tilde{F}_{\infty
}\right) =P\text{.}
\end{equation*}

On the other hand, comparing \eqref{cs15} to \eqref{cs18}, and recalling
that any given $x$ belongs to support $\theta_k$ for at most $C$ distinct $k$%
, we conclude that $\tilde{F}_K \rightarrow \tilde{F}$ pointwise as $K
\rightarrow \infty$.

Since also $\tilde{F}_K \rightarrow \tilde{F}_\infty$ pointwise as $K
\rightarrow \infty$, we have $\tilde{F}_\infty = \tilde{F}$.

Thus, $\tilde{F}\in C^{m}\left( \overline{B_{n}\left( 0,1\right)} \right) $, $%
\left\Vert \tilde{F}\right\Vert _{C^{m}\left(\overline{ B_{n}\left( 0,1\right)} \right)
}\leq C$, and $J_{0}^{+}\left( \tilde{F}\right) =P$, completing the proof of %
\eqref{cs17}.

Finally, we recall the cutoff function $\chi$ from \eqref{cs7}, and define $%
F=\chi \tilde{F}$ on $\mathbb{R}^n$.

From \eqref{cs16}, \eqref{cs17}, and the properties \eqref{cs7} of $\chi $,
we conclude that $F\in C^{m}\left( \mathbb{R}^{n}\right) $, $\left\Vert
F\right\Vert _{C^{m}\left( \mathbb{R}^{n}\right) }\leq C$, $F\geq 0$ on $%
\mathbb{R}^{n}$, and $J_{0}^{+}\left( F\right) =P$.

Thus, we have established (B).

The proof of Lemma \ref{lemma-ccs} is complete.
\end{proof}

Now let $\pi :\mathcal{P}^{+}\rightarrow \mathcal{P}$ denote the natural
projection from $m$-jets at $0$ to $\left( m-1\right) $-jets at $0$, namely, 
$\pi P=J_{0}\left( P\right) $ for $P\in \mathcal{P}^{+}$.

We then set $\tilde{\Gamma}_{0}=\pi \Gamma _{0}^{+}$.

From the above lemma, we learn the following.

\begin{description}
\item[$\left( A^{\prime }\right) $] Let $F\in C^{m}\left( \mathbb{R}%
^{n}\right) $ with $\left\Vert F\right\Vert _{C^{m}\left( \mathbb{R}%
^{n}\right) }\leq c$, $F\geq 0$ on $\mathbb{R}^{n}$. Then $J_{0}\left(
F\right) \in \tilde{\Gamma}_{0}$.

\item[$\left( B^{\prime }\right) $] Let $P\in \tilde{\Gamma}_{0}$. Then
there exists $F\in C^{m}\left( \mathbb{R}^{n}\right) $ such that $\left\Vert
F\right\Vert _{C^{m}\left( \mathbb{R}^{n}\right) }\leq C$, $F\geq 0$ on $%
\mathbb{R}^{n}$, and $J_{0}\left( F\right) =P$.
\end{description}

Recalling the definition (\ref{cs1}), we conclude from ($A^{\prime }$), ($%
B^{\prime }$) that $\Gamma _{\ast }\left( 0,c\right) \subset \tilde{\Gamma}%
_{0}\subset \Gamma _{\ast }\left( 0,C\right) $.

Thus, our $\tilde{\Gamma}_{0}$ satisfies the key condition (\ref{cs5}).

We discuss briefly how the convex set $\tilde{\Gamma}_{0}$ may be computed
in principle. Recall \cite{hormander-2007} that a semialgebraic set is a
subset of a vector space obtained by taking finitely many unions,
intersections, and complements of sets of the form $\left\{ P>0\right\} $
for polynomials $P$. Any subset of a vector space $V$ defined by $E=\left\{
x\in V:\Phi \left( x\right) \text{ is true}\right\} $, where $\Phi $ is a
formula of first-order predicate calculus (for the theory of real-closed
fields) is semialgebraic; moreover, there is an algorithm that accepts $\Phi 
$ as input and exhibits $E$ as a Boolean combination of sets of the form $%
\left\{ P>0\right\} $ for polynomials $P$. For any given $m$, $n$, we see,
by inspection of the definitions of $\Gamma _{0}^{+}$ and $\tilde{\Gamma}_{0}
$, that $\Gamma _{0}^{+}\subset \mathcal{P}^{+}$ is defined by a formula of
first-order predicate calculus; hence, the same holds for $\tilde{\Gamma}%
_{0}\subset \mathcal{P}$.

Therefore, in principle, we can compute $\tilde{\Gamma}_{0}$ as a Boolean
combination of sets of the form $\left\{ P\in \mathcal{P}:\Pi \left(
P\right) >0\right\} $, where $\Pi $ is a polynomial on $\mathcal{P}$.

In practice, we make no claim that we know how to compute $\tilde{\Gamma}%
_{0} $.

It would be interesting to give a more practical method to compute a convex
set satisfying (\ref{cs5}).

\section{$C^{m-1,1}$ Interpolation by Nonnegative Functions}

\label{afc}

In this section we will establish Theorem \ref{Th4} (B) and discuss computational issues for $C^{m-1,1}$ interpolation by nonnegative functions. 

We note that the derivatives $\partial ^{\beta }F$ of $F\in C^{m-1,1}\left( 
\mathbb{R}^{n}\right) $ of order $\left\vert \beta \right\vert \leq m-1$ are
continuous. Also, Taylor's theorem holds in the form 
\begin{equation*}
\left\vert \partial ^{\beta }F\left( y\right) -\sum_{\left\vert \beta
\right\vert +\left\vert \gamma \right\vert \leq m-1}\frac{1}{\gamma !}\left[
\partial ^{\gamma +\beta }F\left( x\right) \right] \cdot \left( y-x\right)
^{\gamma }\right\vert \leq C\left\Vert F\right\Vert _{C^{m-1,1}\left( 
\mathbb{R}^{n}\right) }\cdot \left\vert y-x\right\vert ^{m-\left\vert \beta
\right\vert }
\end{equation*}%
for $x,y\in \mathbb{R}^{n}$.

Similar remarks apply to $C^{m-1,1}\left( Q\right) $ and $C^{m}\left(
Q\right) $ for cubes $Q\subset \mathbb{R}^{n}$.

Therefore, we may repeat the proofs \cite{fil-2016} of Lemmas \ref{lemma-inf1} and \ref%
{lemma-inf2} in Section \ref{inf}, to derive the following results.

\begin{lemma}
\label{lemma-inf1'}For $x\in \mathbb{R}^{n}$, $M>0$, let 
\begin{equation*}
\Gamma _{\ast }^{\prime }\left( x,M\right) =\left\{ 
\begin{array}{c}
P\in \mathcal{P}:\exists F\in C^{m-1,1}\left( \mathbb{R}^{n}\right) \text{
such that } \\ 
\left\Vert F\right\Vert _{C^{m-1,1}\left( \mathbb{R}^{n}\right) }\leq
M,F\geq 0\text{ on }\mathbb{R}^{n}\text{, }J_{x}\left( F\right) =P%
\end{array}%
\right\} \text{.}
\end{equation*}%
Let $f:E\rightarrow \lbrack 0,\infty )$, where $E\subset \mathbb{R}^{n}$ is
finite. For $x\in E$, $M>0$, let 
\begin{equation*}
\Gamma _{f}^{\prime }\left( x,M\right) =\left\{ P\in \Gamma _{\ast }^{\prime
}\left( x,M\right) :P\left( x\right) =f\left( x\right) \right\} \text{.}
\end{equation*}%
Then $\vec{\Gamma}_{f}^{\prime }:=\left( \Gamma _{f}^{\prime }\left(
x,M\right) \right) _{x\in E,M>0}$ is a $\left( C,1\right) $-convex shape
field, where $C$ depends only on $m$, $n$.
\end{lemma}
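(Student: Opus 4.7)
The plan is to mirror the proof of Lemma \ref{lemma-inf1} line by line, substituting $C^{m-1,1}$ for $C^{m}$ throughout and using the $C^{m-1,1}$ version of Taylor's theorem displayed just before the lemma statement. Concretely, given $P_1,P_2 \in \Gamma'_f(x,M)$ and $Q_1,Q_2 \in \mathcal{P}$ satisfying the four conditions \eqref{wsf1}--\eqref{wsf4} of $(C_w,\delta_{\max})$-convexity with $\delta_{\max}=1$, I pick witnesses $F_1,F_2 \in C^{m-1,1}(\mathbb{R}^n)$ with $\|F_i\|_{C^{m-1,1}(\mathbb{R}^n)} \leq M$, $F_i \geq 0$, and $J_x(F_i) = P_i$. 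I build exactly the same cutoff functions $\theta_1,\theta_2$ as in the proof of Lemma \ref{lemma-inf1} (depending only on $Q_1,Q_2,\chi,\delta$), obtaining \eqref{inf19}--\eqref{inf22}; these constructions do not see the function class of $F_i$ at all, so they transfer verbatim. I then set $F = \theta_1^2 F_1 + \theta_2^2 F_2 = F_1 + \theta_2^2(F_2 - F_1)$.

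The main analytic step is to show $\|F\|_{C^{m-1,1}(\mathbb{R}^n)} \leq CM$. Outside $B_n(x,c_0 \delta)$ this is immediate because $F = F_1$ there. On $B_n(x,c_0\delta)$, I need the estimate $|\partial^\beta(F_2 - F_1)| \leq CM\delta^{m-|\beta|}$ for all $|\beta| \leq m$. For $|\beta| \leq m-1$, I apply the displayed $C^{m-1,1}$ Taylor estimate to $F_2 - F_1$ at the base point $x$: since $J_x(F_2 - F_1) = P_2 - P_1$, hypothesis \eqref{inf5} together with the fact that $P_2 - P_1$ is a polynomial of degree $\leq m-1$ gives $|\partial^\beta(P_2 - P_1)(y)| \leq CM\delta^{m-|\beta|}$ on $B_n(x,c_0\delta)$, and Taylor's theorem adds an error of order $CM|y-x|^{m-|\beta|} \leq CM\delta^{m-|\beta|}$. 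For $|\beta| = m$ there is no Taylor expansion to use, but one only needs the essential-sup bound $|\partial^\beta(F_2 - F_1)| \leq 2M = 2M\delta^{0}$, which follows directly from \eqref{inf12}. The Leibniz rule combined with the derivative estimates \eqref{inf19} on $\theta_2$ then yields $|\partial^\beta(\theta_2^2(F_2 - F_1))| \leq CM\delta^{m-|\beta|} \leq CM$ on $B_n(x,c_0\delta)$ for $|\beta|\leq m$ (using $\delta \leq 1$), and hence the desired bound on $F$.

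The remaining properties follow exactly as before: $F \geq 0$ by \eqref{inf13} and $\theta_i^2 \geq 0$; $J_x(F) = Q_1 \odot_x Q_1 \odot_x P_1 + Q_2 \odot_x Q_2 \odot_x P_2 = P$ by \eqref{inf14}, \eqref{inf21}, \eqref{inf23}; and $P(x) = (Q_1(x))^2 f(x) + (Q_2(x))^2 f(x) = f(x)$ by \eqref{inf8}. Thus $P \in \Gamma'_f(x,CM)$, establishing $(C,1)$-convexity.

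The only genuine subtlety, and the place where one must be careful, is the $|\beta|=m$ case, where $\partial^\beta F_i$ need not be continuous; but since the $C^{m-1,1}$ norm is defined via essential suprema and the construction of $\theta_i$ is $C^m$ (in fact $C^\infty$) with the uniform bounds \eqref{inf19}, the Leibniz argument carries through with essential suprema throughout, and no continuity of top-order derivatives is ever invoked.
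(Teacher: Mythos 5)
Your proposal is correct and takes essentially the same approach as the paper: the paper simply states that the proof of Lemma \ref{lemma-inf1} carries over verbatim to the $C^{m-1,1}$ setting using the $C^{m-1,1}$ form of Taylor's theorem recorded at the start of Section \ref{afc}. Your careful handling of the top-order ($|\beta|=m$) case via essential suprema is exactly the routine adjustment needed, and the rest of your argument mirrors the paper's intended one line by line.
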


\begin{lemma}
\label{lemma-inf2'}Let $E$, $f$, $\Gamma _{\ast }^{\prime }\left( x,M\right) 
$ be as in Lemma \ref{lemma-inf1'}, and let $M>0$, $\vec{P}=\left(
P^{x}\right) _{x\in E}\in Wh\left( E\right) $. Suppose we have $P^{x}\in
\Gamma _{\ast }^{\prime }\left( x,M\right) $ for all $x \in E$, and $%
\left\vert \partial ^{\beta }\left( P^{x}-P^{y}\right) \left( x\right)
\right\vert \leq M\left\vert x-y\right\vert ^{m-\left\vert \beta \right\vert
}$ for $x,y\in E$, $\left\vert \beta \right\vert \leq m-1$. Then there
exists $F\in C^{m-1,1}\left( \mathbb{R}^{n}\right) $ such that $J_{x}\left(
F\right) =P^{x}$ for all $x\in E$, and $\left\Vert F\right\Vert
_{C^{m-1,1}\left( \mathbb{R}^{n}\right) }\leq CM$, where $C$ depends only on 
$m$, $n$.
\end{lemma}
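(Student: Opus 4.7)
The plan is to transcribe the proof of Lemma \ref{lemma-inf2} essentially verbatim, replacing the norm $C^m$ by $C^{m-1,1}$ throughout, and to point out the handful of places where the adaptation needs justification. Since $E$ is finite, the Calderón--Zygmund decomposition of $\mathbb{R}^n$ into OK dyadic cubes is constructed from $E$ alone and is unchanged. For each CZ cube $Q$ we classify $Q$ as Type 1, 2, or 3 and pick $x_Q\in E\cap 5Q^+$ for Types 1 and 2 exactly as before. The hypothesis $P^{x_Q}\in\Gamma'_\ast(x_Q,M)$ now furnishes $F_Q\in C^{m-1,1}(\mathbb{R}^n)$ with $\|F_Q\|_{C^{m-1,1}(\mathbb{R}^n)}\le M$, $F_Q\ge 0$, and $J_{x_Q}(F_Q)=P^{x_Q}$ (with $F_Q\equiv 0$ for Type 3).

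Next I would rederive the patching estimate \eqref{inf38}, $|\partial^\beta(F_Q-F_{Q'})|\le CM\delta_Q^{m-|\beta|}$ on $\frac{65}{64}Q\cap\frac{65}{64}Q'$. For $|\beta|\le m-1$ the argument is identical: the Taylor inequality for $C^{m-1,1}$ functions recalled at the start of this section replaces \eqref{inf42}--\eqref{inf43}, and the jet-difference bound \eqref{inf44} is a statement about a polynomial of degree $\le m-1$ and so is unchanged. For $|\beta|=m$ we have $\delta_Q^{m-|\beta|}=1$, so the inequality follows trivially from the essential bounds $|\partial^\beta F_Q|,|\partial^\beta F_{Q'}|\le M$ a.e.

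Gluing is then done with the same Whitney partition of unity $1=\sum_Q\theta_Q$ as in \eqref{inf45}--\eqref{inf46}, setting $F=\sum_Q\theta_Q F_Q$. The sum is locally finite; the same pointwise identity \eqref{inf48} and the same local count (at most $C$ cubes $Q$ with $\hat{x}\in\frac{65}{64}Q$) yield $|\partial^\beta F(\hat{x})|\le CM$ for $|\beta|\le m-1$ at every $\hat{x}\in\mathbb{R}^n$, so in particular $F$ is $C^{m-1}$ with all derivatives through order $m-1$ bounded by $CM$. For $|\beta|=m$, one interprets \eqref{inf48} almost everywhere (using the Leibniz rule, which is valid since each $\theta_Q$ is $C^m$ and each $F_Q\in C^{m-1,1}$, so $\partial^\beta F_Q$ exists a.e.\ and lies in $L^\infty$) to conclude that the $m$-th distributional derivatives of $F$ are in $L^\infty(\mathbb{R}^n)$ with norm at most $CM$. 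Non-negativity of $F$ and the interpolation identity $J_x(F)=P^x$ for $x\in E$ follow exactly as in the original proof, using \eqref{inf37} and \eqref{inf45}.

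The main subtlety, and the one place where the parallel with Lemma \ref{lemma-inf2} requires genuine checking rather than pure transcription, is the top-order step: one must be sure that the pointwise Leibniz identity on each CZ cube glues to a global distributional identity and that the resulting a.e.\ bound certifies $F\in C^{m-1,1}(\mathbb{R}^n)$ in the sense of the definition in Section \ref{notation-prelim}. This is a routine verification once one notes that the lower-order derivatives of $F$ are genuinely continuous on $\mathbb{R}^n$ (so the $L^\infty$ control of the distributional $m$-th derivatives is equivalent to a global Lipschitz bound on the $(m-1)$-th derivatives), but it is the one adaptation worth writing down carefully.
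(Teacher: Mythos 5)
Your proposal is correct and takes exactly the approach the paper intends: the paper itself just asserts that the proof of Lemma \ref{lemma-inf2} may be repeated verbatim in the $C^{m-1,1}$ setting, using the $C^{m-1,1}$ form of Taylor's theorem quoted at the start of Section \ref{afc}. Your careful treatment of the top-order step --- noting that the patching estimate \eqref{inf38} at order $|\beta|=m$ is just the trivial $L^\infty$ bound, and that the Leibniz identity in \eqref{inf48} holds a.e.\ and yields $L^\infty$ control of the distributional $m$-th derivatives --- spells out precisely the adaptations the paper leaves implicit, and no step is missing.
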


Similarly, by making small changes in the proof \cite{fil-2016} of Theorem \ref%
{theorem-fp-for-nonnegative-interpolation}, we obtain the following result.

\begin{lemma}
\label{lemma-inf3'}There exist $k^{\#}$, $C$, depending only on $m$, $n$ for
which the following holds.

Let $E\subset \mathbb{R}^{n}$ be finite, let $f:E\rightarrow \lbrack
0,\infty )$, and let $M_{0}>0$. Suppose that for each $S\subset E$ with $%
\#\left( S\right) \leq k^{\#}$ there exists $\vec{P}^{S}=\left( P^{x}\right)
_{x\in S}\in Wh\left( S\right) $ such that $P^{x}\in \Gamma _{f}^{\prime
}\left( x,M_{0}\right) $ for all $x\in S$, and $\left\vert \partial ^{\beta
}\left( P^{x}-P^{y}\right) \right\vert \leq M_{0}\left\vert x-y\right\vert
^{m-\left\vert \beta \right\vert }$ for $x,y\in S$, $\left\vert \beta
\right\vert \leq m-1$.

Then there exists $F\in C^{m-1,1}\left( \mathbb{R}^{n}\right) $ such that $%
\left\Vert F\right\Vert _{C^{m-1,1}\left( \mathbb{R}^{n}\right) }\leq CM_{0}$%
, $F\geq 0$ on $\mathbb{R}^{n}$, and $F=f$ on $E$.
\end{lemma}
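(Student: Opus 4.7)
My plan is to mirror the proof of Theorem \ref{theorem-fp-for-nonnegative-interpolation}, substituting the $C^{m-1,1}$ objects at the two places where the $C^m$ category appeared. The three ingredients I need are already set up: Lemma \ref{lemma-inf1'} (the shape field $\vec{\Gamma}'_f$ is $(C,1)$-convex), Theorem \ref{theorem-fp-for-wsf} (the general finiteness principle for shape fields), and Lemma \ref{lemma-inf2'} (the nonnegative Whitney-type assembly in $C^{m-1,1}$).

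First I would reduce to the special case $E\subset \tfrac12 Q_0$ with $\delta_{Q_0}=1$, and pick $x_0\in E$ (if $E=\emptyset$ the conclusion is trivial). The hypothesis of Lemma \ref{lemma-inf3'} is precisely the Whitney-field hypothesis required by Theorem \ref{theorem-fp-for-wsf} applied to the shape field $\vec{\Gamma}'_f$ (using Lemma \ref{lemma-inf1'}, with $C_w=C$ and $\delta_{\max}=1$). That theorem then produces some $P^0\in \Gamma'_f(x_0,CM_0)$ and an $F^0\in C^m(Q_0)$ satisfying
\begin{equation*}
J_z(F^0)\in \Gamma'_f(z,CM_0)\quad \text{for all } z\in E\cap Q_0=E,
\qquad |\partial^\beta(F^0-P^0)|\le CM_0 \text{ on } Q_0,\ |\beta|\le m.
\end{equation*}
As in the original argument, $|\partial^\beta P^0(x_0)|\le CM_0$ for $|\beta|\le m-1$ combined with the fact that $P^0$ has degree $\le m-1$ and $\delta_{Q_0}=1$ yields $|\partial^\beta P^0|\le CM_0$ on $Q_0$ for $|\beta|\le m$, and hence $|\partial^\beta F^0|\le CM_0$ on $Q_0$ for $|\beta|\le m$.

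Setting $P^x:=J_x(F^0)$ for $x\in E$, I obtain a Whitney field with $P^x\in \Gamma'_f(x,CM_0)$, and Taylor's theorem applied to $F^0$ on $Q_0$ gives
\begin{equation*}
|\partial^\beta(P^x-P^y)(x)|\le CM_0|x-y|^{m-|\beta|}\quad (x,y\in E,\ |\beta|\le m-1).
\end{equation*}
Now Lemma \ref{lemma-inf2'} produces $F\in C^{m-1,1}(\mathbb{R}^n)$ with $\|F\|_{C^{m-1,1}(\mathbb{R}^n)}\le CM_0$, $F\ge 0$ on $\mathbb{R}^n$, and $J_x(F)=P^x$ for all $x\in E$. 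Since $P^x(x)=f(x)$ (as $P^x\in \Gamma'_f(x,CM_0)$), this gives $F=f$ on $E$, handling the special case.

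For the general case I would copy the partition-of-unity step from the proof of Theorem \ref{theorem-fp-for-nonnegative-interpolation}: write $1=\sum_\nu \chi_\nu$ with $\chi_\nu\ge 0$, $\|\chi_\nu\|_{C^m(\mathbb{R}^n)}\le C$, $\operatorname{supp}\chi_\nu\subset \tfrac12 Q_\nu$ for unit cubes $Q_\nu$ of bounded overlap, apply the special case to each $E_\nu=E\cap \tfrac12 Q_\nu$ with $f_\nu=f|_{E_\nu}$ to obtain nonnegative $F_\nu\in C^{m-1,1}(\mathbb{R}^n)$ with $\|F_\nu\|_{C^{m-1,1}(\mathbb{R}^n)}\le CM_0$, and take $F:=\sum_\nu \chi_\nu F_\nu$. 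Local finiteness gives $F\in C^{m-1,1}_{\mathrm{loc}}$; bounded overlap plus the uniform $C^{m-1,1}$ bounds on $F_\nu$ and uniform $C^m$ bounds on $\chi_\nu$ give $\|F\|_{C^{m-1,1}(\mathbb{R}^n)}\le CM_0$; nonnegativity and interpolation on $E$ are immediate. I do not anticipate any real obstacle, since Theorem \ref{theorem-fp-for-wsf} already delivers the precise estimates needed in the $C^{m-1,1}$ framework — the only point worth double-checking is the product rule bound $\|\chi_\nu F_\nu\|_{C^{m-1,1}}\le C\|\chi_\nu\|_{C^m}\|F_\nu\|_{C^{m-1,1}}$, which holds since differentiating at most $m$ times puts at most one derivative on $F_\nu$ beyond the continuous range, and that derivative is essentially bounded.
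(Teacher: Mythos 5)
Your argument is correct and is precisely what the paper has in mind: the paper's own ``proof'' of Lemma \ref{lemma-inf3'} consists of the single remark that one repeats the proof of Theorem \ref{theorem-fp-for-nonnegative-interpolation} with small changes, and you have carried out exactly those changes, substituting Lemma \ref{lemma-inf1'} and Lemma \ref{lemma-inf2'} for Lemmas \ref{lemma-inf1} and \ref{lemma-inf2} and tracking the $C^{m-1,1}$ norms through the local step, the jet estimates, and the partition-of-unity patching. No gap.
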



Now we can easily deduce the following result.

\begin{theorem}[Finiteness Principle for Nonnegative $C^{m-1,1}$%
-Interpolation]
\label{theorem-fp-for-cm-1-interpolation}There exists constants $k^{\#}$, $C$%
, depending only on $m,n$ for which the following holds.

Let $f:E\rightarrow \lbrack 0,\infty )$, with $E \subset \mathbb{R}^n$
arbitrary (not necessarily finite). Let $M_0 >0$. Suppose that for each $S
\subset E$ with $\#(S) \leq k^\#$ there exists $\vec{P}=(P^x)_{x\in S}\in
Wh(S)$ such that

\begin{itemize}
\item $P^x \in \Gamma_f^{\prime }(x,M_0)$ for all $x \in S$,

\item $\left\vert \partial ^{\beta }\left( P^{x}-P^{y}\right) \left(
x\right) \right\vert \leq M_{0}\left\vert x-y\right\vert ^{m-\left\vert
\beta \right\vert }$ for $x,y\in S$, $\left\vert \beta \right\vert \leq m-1$.
\end{itemize}

Then there exists $F\in C^{m-1,1}\left( \mathbb{R}^{n}\right) $ such that

\begin{itemize}
\item $||F||_{C^{m-1,1}(\mathbb{R}^n)}\leq CM_0$,

\item $F \geq 0$, and

\item $F=f$ on $E$.
\end{itemize}
\end{theorem}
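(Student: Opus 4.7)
The plan is to reduce to Lemma \ref{lemma-inf3'} by applying it to each finite subset of $E$ and then passing to the limit via an Arzel\`a--Ascoli compactness argument. For any finite $S_0 \subset E$, the theorem's hypothesis (restricted to subsets of $S_0$ of size at most $k^\#$) supplies the Whitney fields required by Lemma \ref{lemma-inf3'} with $S_0$ in place of $E$; this yields a function $F_{S_0} \in C^{m-1,1}(\mathbb{R}^n)$ with $\|F_{S_0}\|_{C^{m-1,1}(\mathbb{R}^n)} \leq C M_0$, $F_{S_0} \geq 0$ on $\mathbb{R}^n$, and $F_{S_0} = f$ on $S_0$.

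Next, equip the closed ball $\mathcal{X} := \{F \in C^{m-1,1}(\mathbb{R}^n) : \|F\|_{C^{m-1,1}(\mathbb{R}^n)} \leq C M_0\}$ with the topology of uniform convergence, on compact subsets of $\mathbb{R}^n$, of all derivatives of order at most $m-1$. Elements of $\mathcal{X}$ have their derivatives of order $\leq m-1$ uniformly Lipschitz on compacta (with Lipschitz constant at most $CM_0$), so Arzel\`a--Ascoli combined with a diagonal extraction across a compact exhaustion of $\mathbb{R}^n$ shows that $\mathcal{X}$ is sequentially compact and metrizable in this topology. Any limit in this topology inherits the same $C^{m-1,1}$ norm bound, since a uniform limit of functions with a common Lipschitz constant has the same Lipschitz constant; in particular $\mathcal{X}$ is closed.

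For each $x \in E$, define $K_x := \{F \in \mathcal{X} : F \geq 0 \text{ on } \mathbb{R}^n,\ F(x) = f(x)\}$. Pointwise evaluation at $x$ is continuous on $\mathcal{X}$ (a special case of local uniform convergence), and pointwise nonnegativity is preserved under uniform limits, so each $K_x$ is closed in $\mathcal{X}$. For any finite $\{x_1,\ldots,x_\ell\} \subset E$, the function $F_{\{x_1,\ldots,x_\ell\}}$ produced above lies in $K_{x_1} \cap \cdots \cap K_{x_\ell}$, so the family $\{K_x\}_{x \in E}$ has the finite intersection property. Compactness of $\mathcal{X}$ then yields $F \in \bigcap_{x \in E} K_x$, and any such $F$ satisfies the three required conclusions.

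The only point requiring some care is verifying that the $C^{m-1,1}$ norm bound survives the passage to the limit; once the topology on $\mathcal{X}$ is chosen so that derivatives of order $\leq m-1$ converge locally uniformly, this reduces to a routine Lipschitz-constant argument, and I do not anticipate any genuine obstacle.
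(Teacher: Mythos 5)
Your proof is correct and uses the same core mechanism as the paper --- apply Lemma~\ref{lemma-inf3'} to finite subsets, then pass to the limit via Ascoli-type compactness and the finite intersection property --- but you implement it differently. The paper first restricts to the case $E\subset Q$ for a fixed cube $Q$, where the set $X=\{F\in C^{m-1,1}(Q):\|F\|_{C^{m-1,1}(Q)}\leq CM_0,\ F\geq 0\}$ is compact in the $C^{m-1}(Q)$-norm (a genuine norm topology on a bounded domain, so no diagonalization is needed), and then handles general $E$ by patching the locally constructed $F_\nu$'s with a $C^m$ partition of unity subordinate to unit cubes. You instead work globally on $\mathbb{R}^n$ from the start, using the metrizable topology of $C^{m-1}_{\mathrm{loc}}$-convergence and an Arzel\`a--Ascoli plus diagonal-extraction argument to establish compactness of the unit ball of $C^{m-1,1}(\mathbb{R}^n)$ in that topology. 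Your route saves the entire partition-of-unity gluing step (and the verification that gluing preserves the $C^{m-1,1}$ bound, nonnegativity, and interpolation), at the modest cost of a diagonal argument over a compact exhaustion and the observation that the $C^{m-1,1}$ bound, nonnegativity, and point values all survive $C^{m-1}_{\mathrm{loc}}$-limits. Both are valid; yours is arguably cleaner since it produces the global $F$ in one shot.
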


\begin{proof}
Suppose first that $E\subset Q$ for some cube $Q\subset \mathbb{R}^{n}$.
Then by Ascoli's theorem,%
\begin{equation*}
\left\{ F\in C^{m-1,1}\left( Q\right) :\left\Vert F\right\Vert
_{C^{m-1,1}\left( Q\right) }\leq CM_{0}\text{, }F\geq 0\text{ on }Q\right\}
\equiv X
\end{equation*}
is compact in the $C^{m-1}(Q)$-norm topology.

For each finite $E_0 \subset E$, Lemma \ref{lemma-inf3'} tells us that there
exists $F \in X$ such that $F = f$ on $E_0$.

Consequently, there exists $F \in X$ such that $F = f$ on $E$. That is,

\begin{itemize}
\item[\refstepcounter{equation}\text{(\theequation)}\label{afc1}] $F\in
C^{m-1,1}\left( Q\right) $, $\left\Vert F\right\Vert _{C^{m-1,1}\left(
Q\right) }\leq CM_{0}$, $F\geq 0$ on $Q$, $F=f$ on $E$.
\end{itemize}

We have achieved \eqref{afc1}, assuming that $E \subset Q$.

Now suppose $E \subset \mathbb{R}^n$ is arbitrary.

We introduce a partition of unity $1=\sum_{\nu }\theta _{\nu }$ on $\mathbb{R%
}^{n}$, with $\theta _{\nu }\geq 0$ on $\mathbb{R}^{n}$, $\theta _{\nu }\in
C^{m}\left( \mathbb{R}^{n}\right) $, $\left\Vert \theta _{\nu }\right\Vert
_{C^{m}\left( \mathbb{R}^{n}\right) }\leq C$, support $\theta _{\nu }\subset
Q_{\nu }$ for a cube $Q_{\nu }\subset \mathbb{R}^{n}$, with (say) $\delta
_{Q_{\nu }}=1$, and such that any given $x\in \mathbb{R}^{n}$ has a
neighborhood that intersects at most $C$ of the $Q_{\nu }$. (Here $C$
depends only on $m,n$.)

Applying our result \eqref{afc1} to $f|_{E\cap Q_{\nu }}:E\cap Q_{\nu
}\rightarrow \lbrack 0,\infty )$ for each $\nu $, we obtain functions $%
F_{\nu }\in C^{m-1,1}\left( Q_{\nu }\right) $ such that $\left\Vert F_{\nu
}\right\Vert _{C^{m-1,1}\left( Q_{\nu }\right) }\leq CM_{0}$, $F_{\nu }\geq
0 $ on $Q_{\nu }$, $F_{\nu }=f$ on $E\cap Q_{\nu }$.

(Here $C$ depends only on $m, n$.)

We define $F=\sum_{\nu }\theta _{\nu }F_{\nu }$ on $\mathbb{R}^{n}$. One
checks easily that $\left\Vert F\right\Vert _{C^{m-1,1}\left( \mathbb{R}%
^{n}\right) }\leq C^{\prime }M_{0}$ with $C^{\prime }$ determined by $m$, $n$%
; $F\geq 0$ on $\mathbb{R}^{n}$; and $F=f$ on $E$.

This completes the proof of Theorem \ref{theorem-fp-for-cm-1-interpolation}.
\end{proof}

Note that Theorem \ref{theorem-fp-for-cm-1-interpolation} easily implies Theorem \ref{Th4} (B).

As in the case of nonnegative $C^{m}$-interpolation, we want to replace $%
\Gamma _{f}^{\prime }(x,M)$ by something easier to calculate. In the $%
C^{m-1,1}$-setting, it is enough to make the following observation.

Define \[
\tilde{\Gamma}_{0}^{\prime }=\left\{ 
\begin{array}{c}
P\in \mathcal{P}:\left\vert \partial ^{\beta }P\left( 0\right) \right\vert
\leq 1\text{ for }\left\vert \beta \right\vert \leq m-1\text{ and} \\ 
\text{ }P\left( x\right) +\left\vert x\right\vert ^{m}\geq 0\text{ for all }%
x\in \mathbb{R}^{n}%
\end{array}%
\right\} \text{.}
\]

Then

\begin{itemize}
\item[\refstepcounter{equation}\text{(\theequation)}\label{afc2}] $\Gamma
_{\ast }^{\prime }\left( 0,c\right) \subset \tilde{\Gamma}_{0}^{\prime
}\subset \tilde{\Gamma}_{\ast }^{\prime }\left( 0,C\right) $ with $c$, $C$
depending only on $m$, $n$.
\end{itemize}

Indeed, the first inclusion in \eqref{afc2} is immediate from the
definitions and Taylor's theorem. To prove the second inclusion, we let $P
\in \tilde{\Gamma}^{\prime }_0$ be given, and set $F(x)=\chi(x)(P(x)+|x|^m)$%
, where $\chi$ is a nonnegative $C^m$ function with norm at most $C_{\ast}$
(depending only on $m$, $n$), satisfying $J_0(\chi)=1$ and support $\chi
\subset B_n(0,1)$.

We then have $F\in C^{m-1,1}(\mathbb{R}^{n})$, $\left\Vert F\right\Vert
_{C^{m-1,1}\left( \mathbb{R}^{n}\right) }\leq C$ (depending only on $m$, $n$%
), $F\geq 0$ on $\mathbb{R}^{n}$, $J_{0}\left( F\right) =P$. Hence, $P\in
\Gamma _{\ast }^{\prime }\left( 0,C\right) $, completing the proof of %
\eqref{afc2}.

This concludes our discussion of interpolation by nonnegative $C^{m-1,1}$
functions.

\bigskip

\bibliographystyle{plain}

\begin{thebibliography}{10}

\bibitem{bm-2007}
Edward Bierstone and Pierre~D. Milman.
\newblock {$\scr C^m$}-norms on finite sets and {$\scr C^m$} extension
  criteria.
\newblock {\em Duke Math. J.}, 137(1):1--18, 2007.

\bibitem{bmp-2003}
Edward Bierstone, Pierre~D. Milman, and Wies{\l}aw Paw{\l}ucki.
\newblock Differentiable functions defined in closed sets. {A} problem of
  {W}hitney.
\newblock {\em Invent. Math.}, 151(2):329--352, 2003.

\bibitem{bmp-2006}
Edward Bierstone, Pierre~D. Milman, and Wies{\l}aw Paw{\l}ucki.
\newblock Higher-order tangents and {F}efferman's paper on {W}hitney's
  extension problem.
\newblock {\em Ann. of Math. (2)}, 164(1):361--370, 2006.

\bibitem{bs-1985}
Yuri Brudnyi and Pavel Shvartsman.
\newblock A linear extension operator for a space of smooth functions defined
  on a closed subset in {$\mathbb{R}^n$}.
\newblock {\em Dokl. Akad. Nauk SSSR}, 280(2):268--272, 1985.

  
  \bibitem{bs-1994-a}
Yuri Brudnyi and Pavel Shvartsman.
\newblock The traces of differentiable functions to subsets of {$\mathbb{R}^n$}.
\newblock {\em Linear and complex analysis. Problem book 3. Part II}, Lecture Notes in Mathematics, vol. 1574, Springer-Verlag, Berlin, 1994,  279--281.

\bibitem{bs-1994}
Yuri Brudnyi and Pavel Shvartsman.
\newblock Generalizations of {W}hitney's extension theorem.
\newblock {\em Internat. Math. Res. Notices}, (3):129 ff., approx.\ 11 pp.\
  (electronic), 1994.

\bibitem{bs-1997}
Yuri Brudnyi and Pavel Shvartsman.
\newblock The {W}hitney problem of existence of a linear extension operator.
\newblock {\em J. Geom. Anal.}, 7(4):515--574, 1997.

\bibitem{bs-1998}
Yuri Brudnyi and Pavel Shvartsman.
\newblock The trace of jet space {$J^k\Lambda^\omega$} to an arbitrary closed
  subset of {$\mathbb{ R}^n$}.
\newblock {\em Trans. Amer. Math. Soc.}, 350(4):1519--1553, 1998.

\bibitem{bs-2001}
Yuri Brudnyi and Pavel Shvartsman.
\newblock Whitney's extension problem for multivariate
  {$C^{1,\omega}$}-functions.
\newblock {\em Trans. Amer. Math. Soc.}, 353(6):2487--2512 (electronic), 2001.

\bibitem{f-2005}
Charles Fefferman.
\newblock A sharp form of {W}hitney's extension theorem.
\newblock {\em Ann. of Math. (2)}, 161(1):509--577, 2005.

\bibitem{f-2005-a}
Charles Fefferman.
\newblock A generalized sharp {W}hitney theorem for jets.
\newblock {\em Rev. Mat. Iberoamericana}, 21(2):577--688, 2005.


\bibitem{f-2006}
Charles Fefferman.
\newblock Whitney's extension problem for {$C^m$}.
\newblock {\em Ann. of Math. (2)}, 164(1):313--359, 2006.

\bibitem{f-2007}
Charles Fefferman.
\newblock {$C^m$} extension by linear operators.
\newblock {\em Ann. of Math. (2)}, 166(3):779--835, 2007.




\bibitem{f-2009-b}
Charles Fefferman.
\newblock Whitney's extension problems and interpolation of data.
\newblock {\em Bull. Amer. Math. Soc. (N.S.)}, 46(2):207--220, 2009.



\bibitem{fb1}
Charles Fefferman and Bo'az Klartag.
\newblock Fitting a {$C^m$}-smooth function to data. {I}.
\newblock {\em Ann. of Math. (2)}, 169(1):315--346, 2009.

\bibitem{fb2}
Charles Fefferman and Bo'az Klartag.
\newblock Fitting a {$C^m$}-smooth function to data. {II}.
\newblock {\em Rev. Mat. Iberoam.}, 25(1):49--273, 2009.

\bibitem{fl-2014}
Charles Fefferman and Garving~K. Luli.
\newblock The {B}renner-{H}ochster-{K}oll\'ar and {W}hitney problems for
  vector-valued functions and jets.
\newblock {\em Rev. Mat. Iberoam.}, 30(3):875--892, 2014.

\bibitem{fil-2016}
Charles Fefferman, Arie Israel, and Garving~K. Luli.
\newblock Finiteness principles for smooth selection. 
\newblock {\em to appear}.


\bibitem{gl-1958}
Georges Glaeser.
\newblock \'{E}tude de quelques alg\`ebres tayloriennes.
\newblock {\em J. Analyse Math.}, 6:1--124; erratum, insert to 6 (1958), no. 2,
  1958.

\bibitem{hormander-2007}
Lars H{\"o}rmander.
\newblock {\em The analysis of linear partial differential operators. {III}}.
\newblock Classics in Mathematics. Springer, Berlin, 2007.
\newblock Pseudo-differential operators, Reprint of the 1994 edition.

\bibitem{lgw-2009}
Erwan Le~Gruyer.
\newblock Minimal {L}ipschitz extensions to differentiable functions defined on
  a {H}ilbert space.
\newblock {\em Geom. Funct. Anal.}, 19(4):1101--1118, 2009.

\bibitem{rock-convex}
R.~Tyrrell Rockafellar.
\newblock {\em Convex analysis}.
\newblock Princeton Landmarks in Mathematics. Princeton University Press,
  Princeton, NJ, 1997.
\newblock Reprint of the 1970 original, Princeton Paperbacks.



\bibitem{pavel-1982}
Pavel Shvartsman.
\newblock The traces of functions of two variables satisfying to the Zygmund
  condition.
\newblock In {\em Studies in the Theory of Functions of Several Real Variables,
  (Russian)}, pages 145--168. Yaroslav. Gos. Univ., Yaroslavl, 1982.

\bibitem{pavel-1984}
Pavel Shvartsman.
\newblock Lipschitz sections of set-valued mappings and traces of functions
  from the {Z}ygmund class on an arbitrary compactum.
\newblock {\em Dokl. Akad. Nauk SSSR}, 276(3):559--562, 1984.

\bibitem{pavel-1986}
Pavel Shvartsman.
\newblock Lipschitz sections of multivalued mappings.
\newblock In {\em Studies in the theory of functions of several real variables
  ({R}ussian)}, pages 121--132, 149. Yaroslav. Gos. Univ., Yaroslavl,
  1986.

\bibitem{pavel-1987}
Pavel Shvartsman.
\newblock Traces of functions of Zygmund class.
\newblock {\em (Russian) Sibirsk. Mat. Zh.}, (5):203--215, 1987. English transl. in {\em Siberian Math. J.} 28 (1987), 853--863.

\bibitem{pavel-1992}
Pavel Shvartsman.
\newblock K-functionals of weighted Lipschitz spaces and Lipschitz selections
  of multivalued mappings.
\newblock In {\em Interpolation spaces and related topics}, (Haifa, 1990), 245--268,
Israel Math. Conf. Proc., 5, BarIlan Univ., Ramat Gan, 1992.

\bibitem{s-2001}
Pavel Shvartsman.
\newblock On {L}ipschitz selections of affine-set valued mappings.
\newblock {\em Geom. Funct. Anal.}, 11(4):840--868, 2001.

\bibitem{s-2002}
Pavel Shvartsman.
\newblock Lipschitz selections of set-valued mappings and {H}elly's theorem.
\newblock {\em J. Geom. Anal.}, 12(2):289--324, 2002.


\bibitem{pavel-2004}
Pavel Shvartsman.
\newblock Barycentric selectors and a {S}teiner-type point of a convex body in
  a {B}anach space.
\newblock {\em J. Funct. Anal.}, 210(1):1--42, 2004.


\bibitem{s-2008}
Pavel Shvartsman.
\newblock The {W}hitney extension problem and {L}ipschitz selections of
  set-valued mappings in jet-spaces.
\newblock {\em Trans. Amer. Math. Soc.}, 360(10):5529--5550, 2008.

\bibitem{jw-1973}
John~C. Wells.
\newblock Differentiable functions on {B}anach spaces with {L}ipschitz
  derivatives.
\newblock {\em J. Differential Geometry}, 8:135--152, 1973.

\bibitem{whitney-1934}
Hassler Whitney.
\newblock Analytic extensions of differentiable functions defined in closed
  sets.
\newblock {\em Trans. Amer. Math. Soc.}, 36(1):63--89, 1934.

\bibitem{zobin-1998}
Nahum Zobin.
\newblock Whitney's problem on extendability of functions and an intrinsic metric.
\newblock {\em Advances in Math.}, 133(1):96--132, 1998.

\bibitem{zobin-1999}
Nahum Zobin.
\newblock Extension of smooth functions from finitely connected planar domains.
\newblock {\em Journal of Geom. Analysis}, 9(3):489--509, 1999.
\end{thebibliography}
\def\cprime{$'$} \def\cprime{$'$}

\vspace{1mm} 
\newpage

\begin{flushleft} 
Charles Fefferman \\
Affiliation: Department of Mathematics, Princeton University, Fine Hall Washington Road, Princeton, New Jersey, 08544, USA \\
Email: cf$\MVAt$math.princeton.edu  \\
\vspace{2mm} 
Arie Israel \\ 
Affiliation: The University of Texas at Austin, 
Department of Mathematics,
2515 Speedway Stop C1200, 
Austin, Texas, 78712-1202, USA \\ 
Email: arie$\MVAt$math.utexas.edu \\
\vspace{2mm} 
Garving K. Luli \\ 
Affiliation: Department of Mathematics, 
University of California, Davis, 
One Shields Ave, 
Davis, California, 95616, USA \\ 
Email: kluli@$\MVAt$math.ucdavis.edu 
\end{flushleft}

\end{document}